\renewcommand{\deg}{\operatorname{deg}}
\newcommand{\LHS}{\operatorname{LHS}}
\newcommand{\RHS}{\operatorname{RHS}}
\newtheorem{thm}{Theorem}[section]
\newtheorem{lem}[thm]{Lemma}
\newtheorem{cor}[thm]{Corollary}
\newtheorem{obs}[thm]{Observation}
\theoremstyle{definition}
\newtheorem*{exa}{Example}
\title{Adjacency relationships forced by a degree sequence}
\author{Michael D. Barrus\\
\small Department of Mathematics\\
\small University of Rhode Island\\
\small Kingston, RI 02881\\
\small \tt barrus@uri.edu}
\begin{document}
\maketitle
\begin{abstract}
There are typically several nonisomorphic graphs having a given degree sequence, and for any two degree sequence terms it is often possible to find a realization in which the corresponding vertices are adjacent and one in which they are not. We provide necessary and sufficient conditions for two vertices to be adjacent (or nonadjacent) in every realization of the degree sequence. These conditions generalize degree sequence and structural characterizations of the threshold graphs, in which every adjacency relationship is forcibly determined by the degree sequence. We further show that degree sequences for which adjacency relationships are forced form an upward-closed set in the dominance order on graphic partitions of an even integer.
\end{abstract}

\section{Introduction}\label{sec: intro}

A fundamental goal of the study of graph degree sequences is to identify properties that must be shared by all graphs having the same degree sequence. In this paper we address one of the simplest of graph properties: whether two given vertices are adjacent.

Most degree sequences $d$ are shared by multiple distinct graphs. We call these graphs the \emph{realizations} of $d$. In this paper we consider only labeled graphs, that is, we distinguish between realizations having distinct edge sets, even if these realizations are isomorphic. Throughout the paper we will consider a degree sequence $d=(d_1,\dots,d_n)$ and all realizations of $d$ with vertex set $V = \{1,\dots,n\}$ (we denote such a range of natural numbers by $[n]$) that satisfy the condition that each vertex $i$ has the corresponding degree $d_i$. We will assume in each case, unless otherwise stated, that $d_1 \geq \dots \geq d_n$.

For only one type of degree sequence are all the adjacency relationships in a realization completely determined. These are the \emph{threshold sequences}, those sequences having only one realization. \emph{Threshold graphs}, the graphs realizing threshold sequences, were introduced (via an equivalent definition) by Chv\'{a}tal and Hammer in \cite{ChvatalHammer73,ChvatalHammer77}, as well as by many other authors independently. These graphs have a number of remarkable properties; see the monograph~\cite{MahadevPeled95} for a survey and bibliography. We will refer to several of these properties in the course of the paper.

On the other end of the spectrum from the threshold sequences, many degree sequences have the property that \emph{any} fixed pair of vertices may be adjacent in one realization and nonadjacent in another; such is the case, for example, with $(1,1,1,1)$. For still other sequences, some adjacency relationships are determined while others are not; notice that in the two realizations of $(2,2,1,1,0)$ the vertices of degree $2$ must be adjacent, the vertices of degree $1$ must be nonadjacent, and the vertex of degree $0$ cannot be adjacent to anything, while a fixed vertex of degree $1$ may or may not be adjacent to a fixed vertex of degree $2$.

Suppose that $d$ is an arbitrary degree sequence. We classify pairs $\{i,j\}$ of vertices from $V$ as follows. If $i$ and $j$ are adjacent in every realization of $d$, we say that $\{i,j\}$ is a \emph{forced edge}. If $i$ and $j$ are adjacent in no realization of $d$, then $\{i,j\}$ is a \emph{forced non-edge}. Vertices in a forced edge or forced non-edge are \emph{forcibly adjacent} or \emph{forcibly nonadjacent}, respectively. If $\{i,j\}$ is either a forced edge or forced non-edge, we call it a \emph{forced pair}; otherwise, it is \emph{unforced}. By definition, in threshold graphs every pair of vertices is forced.

In this paper we characterize the forced pairs for general degree sequences. We present conditions that allow us to recognize these pairs from the degree sequence and describe the structure they as a set create in any realization of the degree sequence.

As an alternative viewpoint, given a degree sequence $d$, we may define the \emph{intersection envelope graph} $I(d)$ (respectively, \emph{union envelope graph $U(d)$}) to be the graph with vertex set $[n]$ whose edge set is the intersection (resp., union) of the edge sets of all realizations of $d$. The forced edges of $d$ are precisely the edges of $I(d)$, and the forced non-edges of $d$ are precisely the non-edges of $U(d)$. As we will see, $I(d)$ and $U(d)$ are threshold graphs, and our results allow us to describe these graphs.

One particular property of threshold sequences contextualized by a study of forced pairs is the location of these sequences in the dominance (majorization) order on degree sequences having the same sum. Threshold sequences comprise the maximal elements in this order, and we show that as a collection, degree sequences with forced pairs majorize degree sequences having no forced pairs.

The structure of the paper is as follows: In Section 2 we provide necessary and sufficient conditions on a degree sequence for a pair $\{i,j\}$ to be a forced edge or forced non-edge among realizations of a degree sequence $d$. We then give an alternative degree sequence characterization in terms of Erd\H{o}s--Gallai differences, which we introduce. In Section 3 we study the overall structure of forced pairs in a graph, describing the envelope graphs $I(d)$ and $U(d)$. Finally, in Section 4 we present properties of forced pairs in the context of the dominance order on degree sequences.

Throughout the paper all graphs are assumed to be simple and finite. We use $V(H)$ to denote the vertex set of a graph $H$. A list of nonnegative integers is \emph{graphic} if it is the degree sequence of some graph. A \emph{clique} (respectively, \emph{independent set}) is a set of pairwise adjacent (nonadjacent) vertices.

\section{Degree sequence conditions for forced pairs} \label{sec: degree conditions}
We begin with a straightforward test for determining whether a pair of vertices is forced.

\begin{thm} \label{thm: forced iff not graphic}
Given the degree sequence $d=(d_1,\dots,d_n)$ and vertex set $[n]$, let $i,j$ be distinct elements of $[n]$ such that $i<j$. The pair $\{i,j\}$ is a forced edge if and only if the sequence \[d^+(i,j) = (d_1,\dots,d_{i-1},d_i + 1, d_{i+1}, \dots, d_{j-1}, d_j + 1, d_{j+1},\dots, d_n)\] is not graphic. The pair $\{i,j\}$ is a forced non-edge if and only if the sequence \[d^-(i,j) = (d_1,\dots,d_{i-1},d_i - 1, d_{i+1}, \dots, d_{j-1}, d_j - 1, d_{j+1},\dots, d_n)\] is not graphic.
\end{thm}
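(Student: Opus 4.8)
The plan is to prove each biconditional through its contrapositive in one direction and a ``prescribed adjacency'' realization statement in the other; I will describe the forced-edge claim in detail, since the forced-non-edge claim uses the same machinery. For the \emph{easy direction}, suppose $\{i,j\}$ is not a forced edge. Then some realization $G$ of $d$ has $ij \notin E(G)$, and adding the edge $ij$ to $G$ produces a graph whose degree sequence is exactly $d^+(i,j)$; hence $d^+(i,j)$ is graphic. Contrapositively, if $d^+(i,j)$ is not graphic then $\{i,j\}$ is a forced edge. The analogous argument for forced non-edges simply deletes the edge $ij$ from a realization in which it appears, producing a graph with degree sequence $d^-(i,j)$.

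The content lies in the \emph{hard direction}: if $d^+(i,j)$ is graphic then some realization of $d$ omits the edge $ij$. Both hard directions reduce to a single lemma, namely that if $s$ and $s^-(i,j)$ are both graphic (so in particular $s_i,s_j \geq 1$), then $s$ has a realization in which $i \sim j$. For forced non-edges I would apply this with $s = d$, so that $s^-(i,j) = d^-(i,j)$ is graphic by assumption, obtaining directly a realization of $d$ with $ij \in E$. For forced edges I would apply it with $s = d^+(i,j)$, so that $s^-(i,j) = d$ is graphic; deleting the edge $ij$ from the resulting realization of $d^+(i,j)$ then yields a realization of $d$ in which $ij$ is a non-edge.

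To prove the lemma I would start from any realization $G$ of $s$; if $ij \in E(G)$ we are done, so assume $ij \notin E(G)$. Since $\deg(i), \deg(j) \geq 1$, both $i$ and $j$ have neighbors, and the natural move is a \emph{$2$-switch}: choosing $a \in N(i)$ and $b \in N(j)$ with $a,b \notin \{i,j\}$, $a \neq b$, and $ab \notin E(G)$, one deletes $ia$ and $jb$ and adds $ij$ and $ab$, preserving all degrees while creating the edge $ij$. The main obstacle is the \emph{blocked configuration} in which no such switch is available, that is, every pair $a \in N(i)$, $b \in N(j)$ with $a \neq b$ already satisfies $ab \in E(G)$, so $N(i)$ and $N(j)$ are completely cross-adjacent. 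Here I would either perform a preliminary $2$-switch that first destroys one cross-edge $ab$ to free up a non-edge and then perform the creating switch, or argue extremally: among all realizations of $s$ choose one minimizing the distance between $i$ and $j$, and show that the blocked structure along a shortest $i$--$j$ path either admits a distance-reducing switch or forces $N(i) \cup N(j) \cup \{i,j\}$ to be dense enough that decrementing the degrees at $i$ and $j$ violates an Erd\H{o}s--Gallai inequality, contradicting that $s^-(i,j)$ is graphic. Discharging this blocked case cleanly is the one genuinely technical point; the remaining bookkeeping is routine.
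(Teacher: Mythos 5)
Your easy direction is correct, and your reduction of both hard directions to the single lemma (``if $s$ and $s^-(i,j)$ are both graphic then $s$ has a realization with $i \sim j$'') is valid and even tidier than the paper's architecture, which proves the forced-edge biconditional directly and then derives the forced-non-edge statement by passing to complementary graphs and sequences. The problem is that your lemma is, after the trivial substitution $e = s^-(i,j)$, exactly the hard direction of the theorem (``$e^+(i,j)$ graphic implies $e$ has a realization omitting $ij$''), so all of the content of the theorem sits inside it --- and your proof of the lemma has a genuine gap precisely at its crux. You reach the blocked configuration in which every $a \in N(i)$ and $b \in N(j)$ with $a \neq b$ satisfy $ab \in E(G)$, and then offer two unexecuted escape routes. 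The ``preliminary $2$-switch that destroys a cross-edge'' needs a suitable non-edge elsewhere in the graph, and producing one must somewhere invoke the hypothesis that $s^-(i,j)$ is graphic; you never say how. The extremal alternative (``the blocked structure is dense enough that an Erd\H{o}s--Gallai inequality fails for $s^-(i,j)$'') is only a hope: the blocked neighborhoods $N(i)$ and $N(j)$ need not sit at an initial segment of the degree ordering, so extracting a violated inequality from them is essentially a proof of the relevant case of Theorem~\ref{thm: forced via EG diff}, not routine bookkeeping. Note also that you cannot fall back on ``all realizations are connected by $2$-switches'' to keep switching until $ij$ appears, since the existence of a realization containing $ij$ is exactly what is being proved.

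The paper closes this gap by comparing two graphs rather than locally modifying one: it takes a realization $G$ of $d$ containing the edge $ij$ and a realization $H$ of $d^+(i,j)$ not containing it, forms the symmetric difference, and pairs the $G$-edges and $H$-edges at each vertex (using the fact that all degrees agree except at $i$ and $j$, where they differ by one) to extract an $i$--$j$ path whose edges alternate between the two graphs; switching $G$ along this path together with the edge $ij$ yields a realization of $d$ omitting $ij$. If you want to keep your lemma-based structure, that same symmetric-difference argument, applied to a realization of $s$ and a realization of $s^-(i,j)$, proves your lemma; the single-realization $2$-switch approach as written does not.
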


Before proving this theorem, we introduce some notation. Given a degree sequence $\pi$ of length $n$, let $\overline{\pi}$ denote the degree sequence of the complement of a realization of $\pi$, i.e., $\overline{\pi} = (n-1-d_n, \dots, n-1-d_1)$; we call $\overline{\pi}$ the complementary degree sequence of $\pi$. Note that $\pi$ is also the complementary degree sequence of $\overline{\pi}$.

\begin{proof}
We begin by proving the contrapositives of the statements in the first equivalence. Suppose first that $\{i,j\}$ is not a forced edge for $d$. There must exist a realization $G$ of $d$ in which $i$ and $j$ are not adjacent. The graph $H$ formed by adding edge $ij$ to $G$ has degree sequence $d^+(i,j)$, so $d^+(i,j)$ is graphic.

Suppose now that $d^+(i,j)$ is graphic, and let $H$ be a realization. Suppose also that $G$ is a realization of $d$. If $\{i,j\}$ is not an edge of $G$, then it is not a forced edge for $d$. Furthermore, if $\{i,j\}$ is an edge of $H$, then removing that edge produces a realization of $d$ with no edge between $i$ and $j$, so once again $\{i,j\}$ is not a forced edge. Suppose now that $ij$ is an edge of $G$ but not of $H$. Let $J$ be the symmetric difference of $G$ and $H$, that is, the graph on $[n]$ having as its edges all edges belonging to exactly one of $G$ and $H$. Color each edge in $J$ red if it is an edge of $G$ and blue if it is an edge of $H$. Since the degree of any vertex in $[n]$ other than $i$ and $j$ is the same in both $G$ and $H$, there is an equal number of red and blue edges meeting at such a vertex. For all such vertices, partition the incident edges into pairs that each contain a red and a blue edge. Now vertices $i$ and $j$ each are incident with one more blue edge than red; fix a vertex $v$ such that $iv$ is a blue edge in $J$ and partition the other edges incident with $i$ into pairs containing a red and a blue edge. Do the same thing for the edges incident with $j$ other than a fixed blue edge $jw$. We now find a path from $i$ to $j$ in $J$ whose edges alternate between blue and red. Note that $iv$ is a blue edge, and that this edge is paired with a red edge incident with $v$, which is in turn paired with a blue edge at its other endpoint, and so on. Since each edge of $J$ other than $iv$ and $jw$ is paired with a unique edge of the opposite color at each of its endpoints, the path beginning with $iv$ must continue until it terminates with edge $wj$. Now let $v_0,v_1,\dots,v_\ell$ be the vertices encountered on this path, in order, so that $v_0=i$, $v_1=v$, $v_{\ell-1} = w$, and $v_\ell = j$. The graph $G$ contains edges $v_1v_2, v_3v_4, \dots, v_{\ell-2}v_{\ell-1}$ and $v_\ell v_0$ and non-edges $v_0v_1, v_2v_3, \dots, v_{\ell-1}v_\ell$. Deleting these edges and adding the non-edges as new edges creates a realization of $d$ where $i$ and $j$ are not adjacent, so once again $\{i,j\}$ is not a forced edge for $d$.

Since $\{i,j\}$ is an edge in a realization of $\pi$ if and only if it is not an edge in a realization of $\overline{\pi}$, the pair $\{i,j\}$ is a forced non-edge for $d$ if and only if it is a forced edge for $\overline{d}$, which is equivalent by the preceding paragraph to having $\overline{d}^{+}(i,j)$ not be graphic. Since a list $\pi$ of integers is a degree sequence if and only if $\overline{\pi}$ is a degree sequence, and we can easily verify that $d^-(i,j) = \overline{\overline{d}^+(i,j)}$, the pair $\{i,j\}$ is a forced non-edge in $G$ if and only if $d^{-}(i,j)$ is not a graphic sequence.
\end{proof}

By combining Theorem~\ref{thm: forced iff not graphic} with a test for graphicality we may find alternate characterizations of forced pairs. Here we will use the well known Erd\H{o}s--Gallai criteria~\cite{ErdosGallai60} with a simplification due to Hammer, Ibaraki, and Simeone~\cite{HammerIbarakiSimeone78,HammerIbarakiSimeone81}). For any integer sequence $\pi=(\pi_1,\dots,\pi_n)$, define $m(\pi) = \max\{i:\pi_i \geq i-1\}$.

\begin{thm}[\cite{ErdosGallai60,HammerIbarakiSimeone78,HammerIbarakiSimeone81}] \label{thm: ErdosGallai}
A list $\pi=(\pi_1,\dots,\pi_n)$ of nonnegative integers in descending order is graphic if and only if $\sum_k \pi_k$ is even and \[\sum_{\ell \leq k} \pi_\ell \leq k(k-1) + \sum_{\ell>k} \min\{k, d_\ell\}\] for each $k \in \{1,\dots,m(\pi)\}$.
\end{thm}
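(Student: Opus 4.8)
The plan is to prove the two implications separately and then justify the Hammer--Ibaraki--Simeone restriction of the index range to $\{1,\dots,m(\pi)\}$. For necessity, suppose $\pi$ is realized by a graph $G$ in which vertex $\ell$ has degree $\pi_\ell$. The sum $\sum_\ell \pi_\ell$ counts each edge twice, hence is even. Fixing $k$ and setting $S=\{1,\dots,k\}$, I would write $\sum_{\ell\le k}\pi_\ell = 2e(S) + e(S,\overline S)$, where $e(S)$ is the number of edges inside $S$ and $e(S,\overline S)$ the number leaving $S$, and then bound $2e(S)\le k(k-1)$ while observing that each $v\notin S$ is joined to $S$ by at most $\min\{k,\deg_G v\}$ edges. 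Summing the crossing contribution over $v\notin S$ yields the Erd\H{o}s--Gallai inequality at $k$, for every $k$.

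For the restriction to $k\le m(\pi)$, I would study the slack $f(k)=k(k-1)+\sum_{\ell>k}\min\{k,\pi_\ell\}-\sum_{\ell\le k}\pi_\ell$ and compute its forward difference
\[ f(k+1)-f(k) = 2k + c_{k+1} - \min\{k,\pi_{k+1}\} - \pi_{k+1}, \]
where $c_{k+1}=|\{\ell>k+1:\pi_\ell\ge k+1\}|\ge 0$. When $k\ge m(\pi)$ the definition of $m(\pi)$ gives $\pi_{k+1}\le k-1$, so $\min\{k,\pi_{k+1}\}=\pi_{k+1}$ and $f(k+1)-f(k)\ge 2(k-\pi_{k+1})\ge 2>0$. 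Thus $f$ is strictly increasing on $\{m(\pi),\dots,n\}$, so once $f(m(\pi))\ge 0$ is known the inequalities for all $k>m(\pi)$ hold automatically. Hence the restricted set of inequalities is equivalent to the full set, and it suffices to verify those for $k\in\{1,\dots,m(\pi)\}$.

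For sufficiency --- the substantive direction --- assume $\sum_\ell\pi_\ell$ is even and all Erd\H{o}s--Gallai inequalities hold (the restricted ones suffice, by the paragraph above). I would use a maximality/exchange argument. Consider all graphs $G$ on $[n]$ with $\deg_G(\ell)\le\pi_\ell$ for every $\ell$; this family is nonempty and finite, so I may choose $G$ with the maximum number of edges and, among those, one minimizing $\sum_\ell \ell\,(\pi_\ell-\deg_G(\ell))$, which concentrates any residual degree deficiency at the high-index vertices. If $G$ realizes $\pi$ we are done; otherwise some vertex is deficient. Edge-maximality forces the deficient vertices to form a clique, since any two nonadjacent deficient vertices could be joined by a new edge, and the minimality of the weighted deficiency forbids the exchanges that would transfer deficiency from a lower-index vertex to a higher-index one. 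I would then show that these constraints pin down the adjacency pattern tightly enough that summing degrees over an initial block $\{1,\dots,k\}$ of the appropriate size exceeds $k(k-1)+\sum_{\ell>k}\min\{k,\pi_\ell\}$, contradicting the Erd\H{o}s--Gallai inequality at that $k$.

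The main obstacle is this final step: converting the collection of local exchange constraints satisfied by the extremal graph $G$ into a single violated Erd\H{o}s--Gallai inequality, and in particular identifying the threshold index $k$ at which the bound fails. (Since the restricted inequalities imply the full set by the monotonicity of $f$, any $k\in\{1,\dots,n\}$ is available for producing the contradiction.) The delicate bookkeeping lies in distinguishing the nonadjacencies forced by edge-maximality from those forced by the weighted-deficiency minimality, and in verifying that together they account for exactly enough crossing edges to overshoot the right-hand side at the chosen $k$.
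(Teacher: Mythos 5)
The paper offers no proof of this theorem---it is quoted from Erd\H{o}s--Gallai and Hammer--Ibaraki--Simeone---so your attempt can only be judged on its own terms. Two of your three pieces are correct and complete. The necessity argument (write $\sum_{\ell\le k}\pi_\ell = 2e(S)+e(S,\overline{S})$ for $S=\{1,\dots,k\}$, bound $2e(S)\le k(k-1)$, and bound the crossing edges at each $v\notin S$ by $\min\{k,\deg v\}$) is the standard one and is fine. The reduction of the index range to $\{1,\dots,m(\pi)\}$ is also correct: your forward difference $f(k+1)-f(k)=2k+c_{k+1}-\min\{k,\pi_{k+1}\}-\pi_{k+1}$ checks out, and for $k\ge m(\pi)$ the definition of $m(\pi)$ gives $\pi_{k+1}\le k-1$, hence $f(k+1)-f(k)\ge 2$, so nonnegativity at $k=m(\pi)$ propagates to all larger $k$.

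The genuine gap is sufficiency, which is the entire substance of the theorem, and you flag it yourself: the step of converting the exchange constraints on your extremal graph $G$ into a violated Erd\H{o}s--Gallai inequality at some identified index $k$ is exactly where every known proof does its work, and you leave it unexecuted. As written there is no candidate for $k$, no count of the crossing edges, and no verification that the count overshoots the right-hand side; asserting that the constraints ``pin down the adjacency pattern tightly enough'' is a restatement of the goal, not an argument. There is also a directional slip in your extremal setup: among edge-maximal subrealizations the total deficiency $\sum_\ell(\pi_\ell-\deg_G(\ell))=\sum_\ell\pi_\ell-2|E(G)|$ is constant, so \emph{minimizing} $\sum_\ell \ell\,(\pi_\ell-\deg_G(\ell))$ pushes deficiency toward small indices (the high-degree vertices), which is the opposite of the concentration ``at the high-index vertices'' that you announce; you would need to decide which direction the switching argument actually requires before the clique-of-deficient-vertices step can be exploited. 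To close the gap you would need to carry out something like the constructive subrealization proof of Tripathi, Venugopalan, and West (or the classical induction), where the witnessing $k$ is extracted from the structure of the saturated high-degree vertices and the overcount is a nontrivial case analysis.
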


For each $k \in [n]$, let $\LHS_k(\pi) = \sum_{\ell \leq k} \pi_\ell$ and $\RHS_k(\pi) = k(k-1) + \sum_{\ell>k} \min\{k, d_\ell\}$. We now define the \emph{$k$th Erd\H{o}s--Gallai difference} $\Delta_k(\pi)$ by \[\Delta_k(\pi) = \RHS_k(\pi) - \LHS_k(\pi).\] Note that an integer sequence with even sum is graphic if and only if these differences are all nonnegative.

\begin{thm} \label{thm: forced via EG diff}
Let $d=(d_1,\dots,d_n)$ be a graphic list, and let $i,j$ be integers such that $1 \leq i<j \leq n$. The pair $\{i,j\}$ is a forced edge for $d$ if and only if there exists $k$ such that $1 \leq k \leq n$ and one of the following is true:
\begin{enumerate}
\item[\textup{(1)}] $\Delta_k(d) \leq 1$ and $j \leq k$.
\item[\textup{(2)}] $\Delta_k(d) = 0$; $i \leq k < j$; and $k \leq d_j$.
\end{enumerate}
The pair $\{i,j\}$ is a forced non-edge for $d$ if and only if there exists $k$ such that $1 \leq k \leq n$ and one of the following is true:
\begin{enumerate}
\item[\textup{(3)}] $\Delta_k(d) \leq 1$ and $d_i < k < i$.
\item[\textup{(4)}] $\Delta_k(d) = 0$; $k<i$; and $d_j \leq k \leq d_i$.
\end{enumerate}
\end{thm}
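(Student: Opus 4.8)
The plan is to combine Theorem~\ref{thm: forced iff not graphic} with the Erd\H{o}s--Gallai differences of Theorem~\ref{thm: ErdosGallai}. By Theorem~\ref{thm: forced iff not graphic}, the pair $\{i,j\}$ is a forced edge exactly when $d^+(i,j)$ is not graphic. Since $d$ is graphic its entries sum to an even number, so $d^+(i,j)$ also has even entry sum; hence, by the remark following Theorem~\ref{thm: ErdosGallai}, the sequence $d^+(i,j)$ fails to be graphic if and only if $\Delta_k(d^+(i,j)) \le -1$ for some $k \in [n]$ (the differences are integers, so ``negative'' means ``at most $-1$''). The whole problem therefore reduces to comparing $\Delta_k(d^+(i,j))$ with $\Delta_k(d)$.

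Next I would write $\Delta_k(d^+(i,j)) = \Delta_k(d) + \bigl(\RHS_k(d^+(i,j)) - \RHS_k(d)\bigr) - \bigl(\LHS_k(d^+(i,j)) - \LHS_k(d)\bigr)$ and bound the two parenthesized increments. Because $d^+(i,j)$ raises exactly two entries by $1$, the quantity $\LHS_k$ (the sum of the $k$ largest entries) increases by at most $2$ while $\RHS_k$ can only increase, so $\Delta_k$ drops by at most $2$. As $\Delta_k(d) \ge 0$, the inequality $\Delta_k(d^+(i,j)) \le -1$ can hold only when the net drop in $\Delta_k$ is either $2$ (forcing $\Delta_k(d) \le 1$) or exactly $1$ (forcing $\Delta_k(d) = 0$). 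Classifying which of these occurs, as a function of where $k$ lies relative to $i$ and $j$ and of the value $d_j$, is exactly what produces the two alternatives (1) and (2): a drop of $2$ occurs when both raised entries lie among the top $k$ largest entries of $d^+(i,j)$, a situation controlled by the position of $j$ relative to $k$, yielding condition (1); a drop of exactly $1$ occurs when one raised entry is inside the top $k$ and the other, of value $d_j \ge k$, sits just outside, yielding condition (2). Conversely, if (1) holds then the drop is $2$ and $\Delta_k(d^+(i,j)) = \Delta_k(d) - 2 \le -1$, while if (2) holds then $\Delta_k(d^+(i,j)) \le 0 - 1 = -1$; so sufficiency of each condition is immediate.

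The step I expect to be the main obstacle is carrying out this classification rigorously in the presence of \emph{equal entries}, which is what makes the necessity direction delicate. Raising $d_i$ and $d_j$ can reorder the sequence, so $\LHS_k$ and $\RHS_k$ must be computed for the \emph{sorted} sequence rather than entrywise; I would handle this by expressing $\LHS_k(d) = \sum_{t \ge 1}\min\{k, c_t\}$, where $c_t$ is the number of entries of $d$ that are at least $t$, since this form depends only on the multiset of degrees and makes the effect of each increment transparent. The genuinely subtle point is that when $d_k = d_j$ for some $k$ with $i \le k < j$, the two increments can interfere and still produce a drop of $2$ at an index where $j > k$; there I must show that such a ``bad'' index can always be replaced by one satisfying (1) or (2) as literally stated, using the behavior of $\Delta_k(d)$ across a block of equal degrees (equivalently, that it suffices to test the Erd\H{o}s--Gallai inequalities for $d^+(i,j)$ only at those indices where the sorted sequence strictly decreases, its corner indices). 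Verifying that every bad index is captured, and that neither (1) nor (2) can hold when $d^+(i,j)$ is graphic, is the crux of the argument.

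Finally, the forced non-edge statement follows by the complementation symmetry already used in the proof of Theorem~\ref{thm: forced iff not graphic}: $\{i,j\}$ is a forced non-edge for $d$ if and only if it is a forced edge for $\overline{d}$, which by Theorem~\ref{thm: forced iff not graphic} means $\overline{d}^{+}(i',j')$ is not graphic, where $i' = n+1-j$ and $j' = n+1-i$ are the images of $i,j$ under the reversal defining $\overline{d}$. Translating conditions (1) and (2) for $\overline{d}$ back through the identity $\overline{d}_\ell = n-1-d_{n+1-\ell}$ and the corresponding duality between $\Delta_k(\overline{d})$ and the differences of $d$ converts the requirements ``$j \le k$'' and ``$i \le k < j,\ k \le d_j$'' into the lower-threshold requirements ``$d_i < k < i$'' and ``$k < i,\ d_j \le k \le d_i$'' of (3) and (4). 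Alternatively, one can repeat the increment analysis directly for $d^-(i,j)$, where lowering two entries decreases $\RHS_k$ through the truncated terms $\min\{k,\cdot\}$ rather than $\LHS_k$, so that the roles of $\LHS_k$ and $\RHS_k$ are interchanged and (3) and (4) emerge by the mirror-image computation.
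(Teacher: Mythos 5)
Your overall route is the paper's route: reduce to (non-)graphicality of $d^{\pm}(i,j)$ via Theorem~\ref{thm: forced iff not graphic}, then track how $\LHS_k$ and $\RHS_k$ change, with the position of $k$ relative to $i,j$ and the comparison of $k$ with $d_j$ (resp.\ $d_i$) generating the four conditions. The sufficiency half of your argument and the complementation reduction for (3)--(4) are sound.

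The gap is exactly where you place it, and it is not filled: the necessity direction is deferred rather than proved. As you note, when $d_j+1$ overtakes tied entries, the \emph{sorted} version of $d^+(i,j)$ can violate an Erd\H{o}s--Gallai inequality at an index $K$ at which neither (1) nor (2) holds, and one must exhibit a different witness. A concrete instance: $d=(3,3,3,2,1)$, $i=1$, $j=3$. Here $\Delta_2(d)=1$, and the sorted $d^+(1,3)=(4,4,3,2,1)$ has $\Delta_2=-1$, so $\{1,3\}$ is a forced edge; but at $k=2$ neither (1) (since $j>2$) nor (2) (since $\Delta_2(d)\neq 0$) holds, and the theorem is rescued only by $k=3$, where $\Delta_3(d)=0$ and $j\le 3$. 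Your proposal says ``I must show that such a bad index can always be replaced by one satisfying (1) or (2)'' and calls this the crux --- but that replacement argument is the entire content of the necessity direction for the tie cases, so as written you have proved only one implication. For comparison, the paper takes the fork you mention only in passing: it evaluates $\LHS_k$ and $\RHS_k$ \emph{positionally} on the unsorted $d^{\pm}(i,j)$ (equivalently, it tests the subset Erd\H{o}s--Gallai inequality for the prefix $\{1,\dots,k\}$), which makes every case of the classification a one-line, tie-free computation; in the example above the positional $\Delta_3(d^+(1,3))=-2$ is found immediately by the case $j\le k$. The price of that convention is the implicit claim that the positional prefix inequalities, and not merely the sorted ones, suffice to detect non-graphicality of $d^{\pm}(i,j)$ --- which is precisely the point you isolated, and one the paper passes over when it applies Theorem~\ref{thm: ErdosGallai} to the not-necessarily-descending list $d^+(i,j)$. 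So your diagnosis of where the difficulty sits is correct, but to have a proof you must either carry out the witness-replacement argument for the tie cases or justify the prefix-inequality version of the graphicality test for these nearly sorted sequences.
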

\begin{proof}
By Theorems~\ref{thm: forced iff not graphic} and~\ref{thm: ErdosGallai}, $\{i,j\}$ is a forced edge if and only if there exists an integer $k$ such that $\Delta_k\left(d^+(i,j)\right) < 0$. We prove that this happens if and only if condition (1) or condition (2) holds. Let $k$ be an arbitrary element of $[n]$.

\medskip
\emph{Case: $k<i$.} In this case neither condition (1) nor condition (2) holds. Furthermore, $\LHS_k(d^+(i,j)) = \LHS_k(d) \leq \RHS_k(d) \leq \RHS_k(d^+(i,j))$, so $\Delta_k\left(d^+(i,j)\right) \geq 0$.

\medskip
\emph{Case: $j \leq k$.} Here condition (2) does not hold. Since $\RHS_k(d^+(i,j))=\RHS_k(d)$ and $\LHS_k(d^+(i,j)) \leq \LHS_k(d)+2$, we see that $\Delta_k\left(d^+(i,j)\right) < 0$ if and only if $\Delta_k(d) \leq 1$, which is equivalent to condition (1).

\medskip
\emph{Case: $i \leq k < j$.} Note that condition (1) cannot hold in this case. Since $i \leq k$, we have $\LHS_k(d^+(i,j))=\LHS_k(d)+1$. If $\Delta_k(d) \geq 1$, then $\Delta_k(d^+(i,j)) \geq 0$. If $d_j<k$, then $\RHS_k(d^+(i,j)) = \RHS_k(d)+1$ and $\Delta_k(d^+(i,j)) \geq 0$. If $\Delta_k(d)=0$ and $d_j \geq k$, then $\RHS_k(d^+(i,j))=\RHS_k(d)$ and hence $\Delta_k\left(d^+(i,j)\right) = -1$. Hence $\Delta_k\left(d^+(i,j)\right) < 0$ is equivalent to condition (2).

\medskip
We now consider forced non-edges of $d$. By Theorem~\ref{thm: forced iff not graphic}, $\{i,j\}$ is a forced edge if and only if there exists an integer $k$ such that $\Delta_k\left(d^-(i,j)\right) < 0$. We show that this happens if and only if condition (3) or condition (4) holds. Let $k$ be an arbitrary element of $[n]$. Note that if $k \geq i$ then neither (3) nor (4) holds, and $\LHS_k(d^-(i,j)) < \LHS_k(d)$, forcing $\Delta_k(d^-(i,j)) \geq 0$. Assume now that $k < i$. This forces $\LHS_k(d^-(i,j))=\LHS_k(d)$.

\medskip
\emph{Case: $k\leq d_j$.} Neither condition (3) nor condition (4) holds, and $\RHS_k(d^-(i,j)) = \RHS_k(d)$, so $\Delta_k\left(d^-(i,j)\right) \geq 0$.

\medskip
\emph{Case: $d_i < k$.} Here condition (4) fails. Since $\RHS_k(d^-(i,j))=\RHS_k(d)-2$, we see that $\Delta_k\left(d^-(i,j)\right) < 0$ if and only if $\Delta_k(d) \leq 1$, which is equivalent to condition (3).

\medskip
\emph{Case: $d_j \leq k \leq d_i$.} Here condition (3) fails. Since $d_j\leq k$, we have $\RHS_k(d^-(i,j))=\RHS_k(d)-1$. If $\Delta_k(d) \geq 1$, then $\Delta_k(d^-(i,j)) \geq 0$. If $\Delta_k(d) = 0$, then $\Delta_k\left(d^+(i,j)\right) = -1$. Hence $\Delta_k\left(d^+(i,j)\right) < 0$ is equivalent to condition (4).
\end{proof}

\section{Structure induced by forced pairs}

Theorems~\ref{thm: forced iff not graphic} and~\ref{thm: forced via EG diff} allow us to determine if a single pair of vertices comprises a forced edge or forced non-edge by examining the degree sequence. In this section we determine the structure of all forcible adjacency relationships by describing the envelope graphs $I(d)$ and $U(d)$ introduced in Section~\ref{sec: intro}.

Recall that the edge set of $I(d)$ is the intersection of all edge sets of realizations of $d$, and $U(d)$ is the union of all edge sets of realizations, and realizations have the property that vertex $i$ has degree $d_i$ for all $i \in [n]$. Given a degree sequence $d$ and a realization $G$ of $d$, observe that $I(d)=U(d)=G$ if and only if $G$ is the unique realization of $d$; by definition this happens if and only if $G$ is a threshold graph. As we will see in Theorem~\ref{thm: envelopes are thresholds}, threshold graphs have a more general connection to envelope graphs of degree sequences.

Before proceeding we need a few basic definitions and results. An \emph{alternating 4-cycle} in a graph $G$ is a configuration involving four vertices $a,b,c,d$ of $G$ such that $ab,cd$ are edges of $G$ and neither $ad$ nor $bc$ is an edge. Observe that if $G$ has such an alternating 4-cycle, then deleting $ab$ and $cd$ from $G$ and adding edges $bc$ and $ad$ creates another graph in which every vertex has the same degree as it previously had in $G$. It follows that none of the pairs $\{a,b\}$, $\{b,c\}$, $\{c,d\}$, $\{a,d\}$ is forced in $G$. By a well known result of Fulkerson, Hoffman, and McAndrew~\cite{FulkersonEtAl65}, a graph $G$ shares its degree sequence with some other realization if and only if $G$ contains an alternating 4-cycle. Thus threshold graphs are precisely those without alternating 4-cycles.

\begin{lem}\label{lem: forcible interval}
Suppose that $d_k \geq d_j$. If $ij$ is a forced edge for $d$, then $ik$ is also a forced edge. If $ik$ is a forced non-edge, then $ij$ is also a forced non-edge.
\end{lem}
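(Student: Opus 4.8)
My plan is to prove both statements by reducing, via Theorem~\ref{thm: forced iff not graphic}, to a single claim about graphic sequences: a \emph{unit transfer} from a larger coordinate to a smaller one preserves graphicality. I will argue the contrapositives. Since $\{i,j\}$ and $\{i,k\}$ are pairs we have $i\neq j$ and $i\neq k$; the case $j=k$ is trivial, so I assume $i,j,k$ distinct. For the first statement, Theorem~\ref{thm: forced iff not graphic} says $ij$ is a forced edge exactly when $d^+(i,j)$ is not graphic, so I must show that if $ik$ is \emph{not} a forced edge, i.e. $d^+(i,k)$ is graphic, then $d^+(i,j)$ is graphic. The key observation is that the two sequences are related by $d^+(i,j) = d^+(i,k) - e_k + e_j$, where $e_\ell$ is the $\ell$th standard basis vector; that is, $d^+(i,j)$ is obtained from $d^+(i,k)$ by lowering coordinate $k$ by one and raising coordinate $j$ by one. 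Writing $\tau = d^+(i,k)$, the hypothesis $d_k \geq d_j$ gives $\tau_k = d_k + 1 > d_j = \tau_j$, so this is a transfer from a strictly larger coordinate to a smaller one.

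The engine of the proof will therefore be the following transfer lemma, which I will prove separately: if $\tau$ is graphic and $\tau_k > \tau_j$ for two distinct coordinates, then $\tau - e_k + e_j$ is graphic. To see this I will take any realization $H$ of $\tau$ and look for a vertex $w$ with $w \sim k$, $w \not\sim j$, and $w \notin \{j,k\}$; replacing the edge $wk$ by the edge $wj$ then produces a realization of $\tau - e_k + e_j$, since only the degrees of $j$ and $k$ change, by $-1$ and $+1$ respectively. This is precisely an alternating-4-cycle style swap on $j$, $k$, $w$.

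The main obstacle — and the only real content — is showing that such a vertex $w$ always exists. I will argue by contradiction: if no such $w$ exists then $N(k) \setminus \{j\} \subseteq N(j)$, and comparing sizes using $\deg_H k = \tau_k \geq \tau_j + 1 = \deg_H j + 1$ forces equality throughout, yielding both $j \sim k$ and $N(k) \setminus \{j\} = N(j)$. But $j \sim k$ puts $k$ in $N(j)$, hence in $N(k) \setminus \{j\}$, i.e. $k \in N(k)$, which is impossible in a simple graph. This contradiction establishes the transfer lemma, and applying it to $\tau$ shows $d^+(i,j)$ is graphic, which is the contrapositive of the first statement.

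The second statement will follow by the same mechanism applied to the complementary operation. By Theorem~\ref{thm: forced iff not graphic}, $ik$ is a forced non-edge exactly when $d^-(i,k)$ is not graphic, so I must show that if $d^-(i,j)$ is graphic then $d^-(i,k)$ is graphic. Here the relation between the sequences is $d^-(i,k) = d^-(i,j) - e_k + e_j$; writing $\rho = d^-(i,j)$, the hypothesis $d_k \geq d_j$ gives $\rho_k = d_k \geq d_j > d_j - 1 = \rho_j$, again a transfer from a strictly larger coordinate to a smaller one. Applying the transfer lemma to $\rho$ shows $d^-(i,k)$ is graphic, completing the proof.
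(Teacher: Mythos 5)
Your proof is correct, but it takes a genuinely different route from the paper's. The paper argues directly inside a realization of $d$: assuming $ik$ is a non-edge in some realization while $ij$ is an edge, it uses the degree comparison $d_k \geq d_j$ to find a vertex $\ell$ adjacent to $k$ but not to $j$, producing an alternating $4$-cycle on $i,j,k,\ell$ whose swap destroys the edge $ij$ and contradicts forcedness; the non-edge half is then dispatched in one line by passing to complements. You instead route everything through Theorem~\ref{thm: forced iff not graphic}, observing that $d^+(i,j)$ and $d^+(i,k)$ (and likewise $d^-(i,k)$ and $d^-(i,j)$) differ by a single unit transfer from a strictly larger coordinate to a strictly smaller one, and you isolate the reusable fact that such a transfer preserves graphicality, proved by an edge-endpoint swap whose existence argument ($N(k)\setminus\{j\}\subseteq N(j)$ would force $k\in N(k)$) is essentially the same neighborhood-counting step the paper performs implicitly when it asserts $\ell$ exists. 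Your version buys a self-contained, degree-sequence-level statement closely related to the downward closure of graphic sequences in the dominance order (and thus foreshadows Section~4), and it avoids invoking the alternating-$4$-cycle machinery; the paper's version is shorter and stays closer to the structural viewpoint used in Theorem~\ref{thm: envelopes are thresholds}. One small point in your favor: your explicit verification that the offending vertex $w$ is distinct from $j$ and $k$ is exactly the detail the paper's phrase ``$k$ must be adjacent to a vertex $\ell$ to which $j$ is not'' quietly elides (one must also rule out $\ell=j$ there), so your write-up is, if anything, slightly more careful.
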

\begin{proof}
Suppose that $ij$ is a forced edge. If $ik$ is not a forced edge, then let $G$ be a realization of $d$ where $ik$ is not an edge. Since $d_k \geq d_j$ and $j$ has a neighbor (namely $i$) that $k$ does not, $k$ must be adjacent to a vertex $\ell$ to which $j$ is not. However, then there is an alternating 4-cycle with vertices $i,j,k,\ell$ that contains the edge $ij$, a contradiction, since $ij$ is a forced edge. By considering complementary graphs and sequences, this argument also shows that if $ik$ is a forced non-edge, then $ij$ is a forced non-edge as well.
\end{proof}

\begin{thm} \label{thm: envelopes are thresholds}
For any degree sequence $d$, both $I(d)$ and $U(d)$ are threshold graphs.
\end{thm}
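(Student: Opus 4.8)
The plan is to use the characterization recorded just before the statement: a graph is a threshold graph if and only if it contains no alternating 4-cycle. Accordingly, I would prove the theorem by showing that neither $I(d)$ nor $U(d)$ admits an alternating 4-cycle, deriving in each case a contradiction from Lemma~\ref{lem: forcible interval}. The two halves are dual to one another, so essentially all of the work is in the statement about $I(d)$; the statement about $U(d)$ will then follow from the companion (forced-non-edge) half of Lemma~\ref{lem: forcible interval} by a parallel argument, or alternatively by complementation.

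For $I(d)$, I would suppose toward a contradiction that $I(d)$ contains an alternating 4-cycle on vertices $a,b,c,e$, with edges $ab$ and $ce$ and non-edges $bc$ and $ae$. Since the edges of $I(d)$ are exactly the forced edges of $d$, the pairs $\{a,b\}$ and $\{c,e\}$ are forced edges, while $\{b,c\}$ and $\{a,e\}$ are not. I would then apply Lemma~\ref{lem: forcible interval} twice, each time holding one vertex fixed. Holding $b$ fixed and comparing the partners $a$ and $c$: if $d_c \geq d_a$, then, since $\{a,b\}$ is a forced edge, the lemma forces $\{b,c\}$ to be a forced edge as well, contradicting that $\{b,c\}$ is a non-edge of $I(d)$; hence $d_a > d_c$. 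Holding $e$ fixed and comparing the partners $c$ and $a$: if $d_a \geq d_c$, then, since $\{c,e\}$ is a forced edge, the same lemma forces $\{a,e\}$ to be a forced edge, again a contradiction; hence $d_c > d_a$. These two strict inequalities are incompatible, so $I(d)$ has no alternating 4-cycle and is therefore threshold.

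For $U(d)$ I would argue dually. The non-edges of $U(d)$ are exactly the forced non-edges of $d$, so an alternating 4-cycle of $U(d)$ on $a,b,c,e$ (edges $ab$, $ce$; non-edges $bc$, $ae$) would make $\{b,c\}$ and $\{a,e\}$ forced non-edges while $\{a,b\}$ and $\{c,e\}$ are not; applying the forced-non-edge half of Lemma~\ref{lem: forcible interval} in the same two configurations yields $d_a > d_c$ from $\{b,c\}$ and $d_c > d_a$ from $\{a,e\}$, the same contradiction. (Equivalently, one may observe that, up to the natural relabeling, the complement of $U(d)$ is $I(\overline{d})$, since the forced non-edges of $d$ are the forced edges of $\overline{d}$ as noted in the proof of Theorem~\ref{thm: forced iff not graphic}; then $U(d)$ is the complement of a threshold graph and hence itself threshold.) I do not expect a serious obstacle: the conceptual content is packaged entirely in Lemma~\ref{lem: forcible interval}, which says that forced adjacencies and non-adjacencies respect the degree order and is exactly the nested-neighborhood phenomenon underlying threshold structure. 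The only point needing care is the bookkeeping in each application of the lemma—choosing the correct fixed vertex and the correct pair $\{j,k\}$ with $d_k \geq d_j$, and verifying that the two resulting inequalities are strict and genuinely opposite.
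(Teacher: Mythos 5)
Your proposal is correct and follows essentially the same route as the paper: rule out an alternating 4-cycle in each envelope graph using Lemma~\ref{lem: forcible interval}. The only cosmetic difference is that the paper fixes the vertex of smallest (resp.\ largest) degree without loss of generality and applies the lemma once, whereas you apply it twice to obtain two contradictory strict inequalities; both versions of the bookkeeping are valid.
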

\begin{proof}
If $I(d)$ is not a threshold graph, then it contains an alternating 4-cycle with edges we denote by $pq, rs$ and non-edges $qr, ps$. Without loss of generality we may suppose that $p$ has the smallest among the degrees of these four vertices. Since $q$ is forcibly adjacent to $p$, by Lemma~\ref{lem: forcible interval} it must be forcibly adjacent to $r$, a contradiction, since $qr$ is not an edge in $I(d)$.

Similarly, if $U(d)$ has an alternating 4-cycle on $p,q,r,s$ as above, and if we assume that $p$ has the largest degree of these vertices, then by Lemma~\ref{lem: forcible interval} since $s$ is forcibly nonadjacent to $p$ it must be forcibly nonadjacent to $r$, a contradiction.
\end{proof}

We now turn to a precise description of $I(d)$ and $U(d)$. Examining the four scenarios in Theorem~\ref{thm: forced via EG diff} under which forcible adjacency relationships may occur, we notice that if for some $k$ we have $\Delta_k=0$, then
\begin{itemize}
\item the set $B=\{i:1 \leq i \leq k\}$ is a clique in which all pairs of vertices are forcibly adjacent;
\item the set $A=\{i:i>k \text{ and } d_i<k\}$ is an independent set in which all pairs of vertices are forcibly nonadjacent; and
\item each vertex in $C=\{i:i>k \text{ and } d_i \geq k\}$ belongs to a forced edge with each vertex in $B$ and belongs to a forced non-edge with each vertex in $A$.
\end{itemize}

This structure of adjacencies within and between $A$, $B$, and $C$ has arisen many times in the literature. In particular, R.I.~Tyshkevich and others described a graph decomposition based upon it, which we now briefly recall. Our presentation is adapted from \cite{Tyshkevich00}, which contains a more detailed presentation and references to earlier papers.

A \emph{split graph} is a graph $G$ for which there exist disjoint sets $A,B$ such that $A$ is an independent set and $B$ is a clique in $G$, and $V(G)=A \cup B$. We define an operation $\circ$ with two inputs. The first input is a split graph $F$ with a given partition of its vertex set into an independent set $A$ and a clique $B$ (denote this by $(F,A,B)$), and the second is an arbitrary graph $H$. The \emph{composition $(F,A,B)\circ H$} is defined as the graph resulting from adding to the disjoint union $F+H$ all edges having an endpoint in each of $B$ and $V(H)$. For example, if we take the composition of the 5-vertex split graph with degree sequence $(3,2,1,1,1)$ (with the unique partition of its vertex set into a clique and an independent set) and the graph $2K_2$, then the result is the graph on the right in Figure~\ref{fig: composition}.%
\begin{figure}
\centering
\begin{tabular}{m{4cm}m{1cm}m{2.5cm}}
\includegraphics[height=1in]{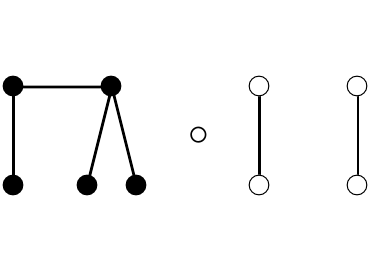} & 
$=$ & 
\includegraphics[height=1in]{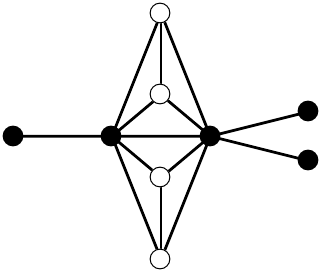}
\end{tabular}
\caption{The composition of a split graph and a graph.}
\label{fig: composition}
\end{figure}

If $G$ contains nonempty induced subgraphs $H$ and $F$ and a partition $A,B$ of $V(F)$ such that $G = (F,A,B) \circ H$, then G is \emph{(canonically) decomposable}; otherwise $G$ is \emph{indecomposable}. Tyshkevich showed in~\cite{Tyshkevich00} that each graph can be expressed as a composition $(G_k,A_k,B_k) \circ \dots \circ (G_1,A_1,B_1) \circ G_0$ of indecomposable induced subgraphs (note that $\circ$ is associative); indecomposable graphs are those for which $k = 0$. This representation is known as the \emph{canonical decomposition} of the graph and is unique up to isomorphism of the indecomposable (partitioned) subgraphs involved.

As observed by Tyshkevich~\cite{Tyshkevich00}, the canonical decomposition corresponds in a natural way with a decomposition of degree sequences of graphs, and it is possible from the degree sequence to deduce whether a graph is canonically indecomposable. In~\cite{HeredUniII}, the author made explicit some relationships between the canonical decomosition of degree sequences and the Erdos--Gallai inequalities recalled in Section~\ref{sec: degree conditions}.

Let $EG(d)$ be the list of nonnegative integers $\ell$ for which $\Delta_\ell=0$, ordered from smallest to largest. We adopt the convention that empty sums have a value of zero in the definitions of $\LHS_\ell(d)$ and $\RHS_\ell(d)$; thus $\Delta_0(d)=0$ for all $d$, and $EG(d)$ always begins with $0$.

\begin{thm}[\cite{HammerSimeone81, Tyshkevich80, TyshkevichEtAl81}] \label{thm: split seqs}
A graph $G$ with degree sequence $d=(d_1,\dots,d_n)$ is split if and only if $m(d)$ is a term of $EG(d)$.
\end{thm}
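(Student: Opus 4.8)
The plan is to prove directly that $G$ is split if and only if $\Delta_m(d) = 0$, where $m = m(d)$; since $EG(d)$ always begins with $0$ and collects exactly those indices $\ell$ with $\Delta_\ell(d)=0$, membership of $m$ in $EG(d)$ is precisely this equality. The first step is a bookkeeping simplification: because $d$ is nonincreasing and $d_{m+1} < m$, we have $\min\{m,d_\ell\} = d_\ell$ for every $\ell > m$, so $\RHS_m(d) = m(m-1) + \sum_{\ell>m} d_\ell$ and hence
\[ \Delta_m(d) = m(m-1) + \sum_{\ell > m} d_\ell - \sum_{\ell \le m} d_\ell. \]
I would then read the Erd\H{o}s--Gallai inequality $\Delta_m(d) \ge 0$ through its combinatorial meaning in an arbitrary realization $G$: writing $\LHS_m(d) = 2e + \sum_{\ell > m} a_\ell$, where $e$ is the number of edges of $G$ among the $m$ highest-degree vertices and $a_\ell$ is the number of neighbors vertex $\ell$ has among them, the bounds $2e \le m(m-1)$ and $a_\ell \le \min\{m,d_\ell\} = d_\ell$ recover the inequality.

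For the backward direction I would exploit the equality case of these two bounds. If $\Delta_m(d) = 0$, then in \emph{every} realization $G$ both bounds must be tight: $2e = m(m-1)$, so the top $m$ vertices form a clique, and $a_\ell = d_\ell$ for each $\ell > m$, so every remaining vertex sends all of its edges into the top $m$ and the bottom $n-m$ vertices form an independent set. Thus $G$ is split, with the degree-ordered partition serving as a split partition.

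The forward direction is where the real work lies. Given that $G$ is split, I would first establish the key lemma that $G$ admits a split partition $(K,I)$ respecting the degree order, i.e. with $\deg x \ge \deg y$ whenever $x \in K$ and $y \in I$. I would prove this by choosing a split partition with $|K|$ as large as possible and arguing by contradiction: if some $u \in K$ and $v \in I$ had $\deg u < \deg v$, then comparing $\deg u = (|K|-1) + |N(u) \cap I|$ with $\deg v = |N(v) \cap K| \le |K|$ would force $u$ to have no neighbor in $I$ and $v$ to be adjacent to all of $K$; but then $K \cup \{v\}$ is a larger clique with $I \setminus \{v\}$ still independent, contradicting maximality. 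With such a partition in hand and $p = |K|$, the same counting as in the first paragraph gives $\Delta_p(d) = 0$.

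It then remains to upgrade $p$ to $m$. Since $K$ is a clique, each of its $p$ vertices has degree at least $p-1$, so $p \le m$; conversely any index $i$ with $p < i \le m$ would lie in $I$, forcing $d_i \le p$ while $d_i \ge i-1 \ge p$, so that $i = p+1$ and $d_{p+1} = p$. In that lone exceptional case vertex $p+1$ is adjacent to all of $K$, so the top $p+1$ vertices already form a clique and $(K \cup \{p+1\}, I \setminus \{p+1\})$ is a split partition of clique size $p+1 = m$. In either case we obtain a degree-ordered split partition whose clique has size exactly $m$, and the counting argument yields $\Delta_m(d) = 0$. The main obstacle is this forward direction, and within it the maximum-clique extremal argument for the degree-respecting partition together with the case analysis pinning the clique size to $m$; a minor technical point to handle carefully is ties in degree at the boundary between $K$ and $I$, which do not affect the degree \emph{sums} entering $\LHS_m$ and $\RHS_m$ but must be acknowledged when identifying $K$ with an initial segment of the ordering.
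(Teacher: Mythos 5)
The paper does not prove this statement at all---it is quoted as a known result of Hammer--Simeone and of Tyshkevich et al., so there is no internal proof to compare against. Your argument is correct and self-contained, and it is worth noting how it relates to the machinery the paper does develop. Your backward direction (tightness of $2e \le m(m-1)$ and $a_\ell \le d_\ell$ when $\Delta_{m}(d)=0$) is exactly the specialization of the paper's Lemma~\ref{lem: EG diff counts this} to $k=m(d)$, $B=\{1,\dots,m\}$, $A=\{m+1,\dots,n\}$, $C=\emptyset$: there $\Delta_m(d)=2e(A)+2\overline{e}(B)$, which vanishes if and only if $B$ is a clique and $A$ is independent, so every realization is split. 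The forward direction is where you supply something the paper never writes down: the extremal lemma that a split graph has a split partition $(K,I)$ respecting the degree order (maximize $|K|$; a violating pair $u\in K$, $v\in I$ with $\deg u<\deg v$ forces $N(u)\cap I=\emptyset$ and $v$ complete to $K$, letting $v$ migrate into the clique), followed by the counting $\sum_{\ell\le p}d_\ell=p(p-1)+\sum_{\ell>p}d_\ell$ and the case analysis showing the clique size $p$ can be taken equal to $m(d)$ (the only possible index in $(p,m]$ is $p+1$ with $d_{p+1}=p$, and such a vertex is complete to $K$ and can be absorbed). All steps check out, including the boundary-tie relabeling you flag, which indeed affects only the identification of $K$ with an initial segment and not the degree sums. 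This is essentially the classical Hammer--Simeone argument; the one economy you could have taken, had the ordering of the paper permitted it, is to invoke Lemma~\ref{lem: EG diff counts this} directly for both the counting identity and the equality analysis rather than redoing the edge count by hand.
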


\begin{thm}[\cite{HeredUniII}, Theorem~5.6] \label{thm: EG and canon decomp}
Let $G$ be a graph with degree sequence $d=(d_1,\dots,d_n)$ and vertex set $[n]$. Suppose that $(G_k,A_k,B_k) \circ \dots \circ (G_1,A_1,B_1) \circ G_0$ is the canonical decomposition of $G$, where $A_0$ and $B_0$ partition $V(G_0)$ into an independent set and a clique, respectively, if $G_0$ is split. A nonempty set $W \subseteq V(G)$ is equal to the clique $B_j$ in the canonical component $G_j$ if and only if $W=\{\ell : t<\ell \leq t'\}$ for a pair $t,t'$ of consecutive terms in $EG(d)$. In this case the corresponding independent set $A_j$ is precisely the set $\{\ell \in [n]: t<d_\ell<t'\}$. Given a term $t$ of $EG(d)$, if $\ell > t$ and $d_\ell = t$ then the canonical component containing $\ell$ consists of only one vertex.
\end{thm}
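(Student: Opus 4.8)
The plan is to route the whole statement through a single structural translation of the Erd\H{o}s--Gallai equalities and then to peel off canonical components one at a time. The key observation is that the vanishing of a difference marks exactly a place where the Tyshkevich composition cuts. Concretely, I would first record a cut lemma: for $0 \leq k \leq n$, the equality $\Delta_k(d)=0$ is precisely the equality case of Theorem~\ref{thm: ErdosGallai}, which forces in every realization that $B=\{1,\dots,k\}$ is a clique, that every vertex $v>k$ with $d_v \geq k$ is adjacent to all of $B$, and that every vertex $v>k$ with $d_v<k$ has all of its neighbours inside $B$ (this is the forced structure already listed just before the statement). Setting $A=\{v>k:d_v<k\}$ and $C=\{v>k:d_v\geq k\}$, this says precisely that $G=(G[A\cup B],A,B)\circ G[C]$, a genuine composition as soon as $C\neq\emptyset$. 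Since $C\neq\emptyset$ is equivalent to $d_{k+1}\geq k$, and $d_{k+1}\geq k$ is equivalent to $k<m(d)$, every term of $EG(d)$ strictly below $m(d)$ is a genuine cutting point, with its clique on top.

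Next I would peel off the top. Cutting at the smallest positive term $t_1$ of $EG(d)$ produces a top split piece $G[A\cup B]$ with clique $B=\{1,\dots,t_1\}$, below which hangs $H=G[C]$. I would check that this top piece, after its isolated (degree $0$) vertices are split off as one-vertex components, is indecomposable: a further cut would need $\Delta_s(d)=0$ for some $0<s<t_1$ (one verifies that $\Delta_s$ of the reduced split piece agrees with $\Delta_s(d)$ throughout this range), which is impossible by minimality of $t_1$. Its independent set is then exactly $\{\ell:0<d_\ell<t_1\}$, since the clique occupies positions $1,\dots,t_1$ while the positive low-degree vertices fill the independent part. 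This settles the base case and simultaneously fixes the shape of $B_j$ and $A_j$ for the top component.

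I would then recurse on $H=G[C]$, and the cleanest way to propagate the induction is structurally rather than through raw difference arithmetic: by the cut lemma applied inside $H$ together with the complete join between $B$ and $C$, a cutting point of $H$ at position $u$ is exactly a cutting point of $G$ at position $t_1+u$. Hence the terms of $EG$ for the degree sequence of $H$ are precisely $\{t-t_1 : t\in EG(d),\ t>t_1\}$, while the degrees in $H$ are the original degrees shifted down by $t_1$. Induction on the number of terms of $EG(d)$ then yields, for each consecutive pair $t,t'$ of terms lying at or below $m(d)$, a split component whose clique is the interval of positions $\{\ell:t<\ell\leq t'\}$ and whose independent set is $\{\ell:t<d_\ell<t'\}$. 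Whether the recursion ends in a final split component or leaves an indecomposable non-split $G_0$ is decided by Theorem~\ref{thm: split seqs}: if $m(d)\in EG(d)$ then $G$ is split and the last interval $\{\ell:t<\ell\leq m(d)\}$ is the clique $B_0$, while if $m(d)\notin EG(d)$ the largest term $t_r$ satisfies $t_r<m(d)$ and the vertices remaining below the final cut form the non-split $G_0$. The singleton clause falls out along the way: if $d_\ell=t$ for a term $t\in EG(d)$ with $\ell>t$, then at the $t$-cut the vertex $\ell$ lies in $C$ and, having degree exactly $t=|\{1,\dots,t\}|$, is adjacent to all of $\{1,\dots,t\}$ and to nothing else, so it is isolated in the remainder and splits off as a one-vertex canonical component. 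Associativity and uniqueness of the canonical decomposition let me assemble the one-level cuts, and both directions of the equivalence follow, since the cliques produced are exactly the intervals between consecutive terms of $EG(d)$.

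I expect the main obstacle to be the inductive bookkeeping of the third paragraph: making airtight the claim that the cutting points of the reduced graph $H$ correspond, after the shift by $t_1$, bijectively to the terms of $EG(d)$ above $t_1$, and, in tandem, verifying that every peeled-off piece is genuinely indecomposable so that the assembled decomposition really is the canonical one and not merely a coarsening. The delicate point is that the raw differences $\Delta_u$ of $H$ need not equal the corresponding $\Delta_{t_1+u}(d)$, so this correspondence must be argued through the structural cut lemma rather than by substituting into the Erd\H{o}s--Gallai formula. I would also take care to quarantine the vertices whose degree equals a term of $EG(d)$ (the degree-$0$ vertices being the first instance) as singleton components \emph{before} asserting indecomposability of the split pieces, invoking Theorem~\ref{thm: split seqs} to certify the splitness (or not) of the terminal component.
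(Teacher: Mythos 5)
The paper does not prove this statement: it is imported verbatim as Theorem~5.6 of~\cite{HeredUniII}, so there is no in-paper proof to compare yours against. Judged on its own merits, your outline is sound and is essentially the natural derivation from the material the paper does supply. Your ``cut lemma'' is exactly the bulleted observation preceding Theorem~\ref{thm: split seqs} (and is an immediate consequence of Lemma~\ref{lem: EG diff counts this}: $\Delta_k(d)=0$ kills all four nonnegative terms, giving the clique $B$, the independent set $A$ with neighbours only in $B$, and the complete join $B$--$C$, i.e.\ $G=(G[A\cup B],A,B)\circ G[C]$). Your nonemptiness criterion $C\neq\emptyset\iff k<m(d)$ is correct, as is the identity you defer, $\Delta_s(d^F)=\Delta_s(d)$ for $0<s<t_1$ (both sides of the Erd\H{o}s--Gallai inequality for the top piece drop by exactly $s|C|$), so the minimality-of-$t_1$ argument for indecomposability goes through. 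The singleton clause is handled correctly: a vertex with $d_\ell=t\in EG(d)$ and $\ell>t$ lands in $C$ at the $t$-cut with degree exactly $|B|$, hence is isolated in $G[C]$ and splits off.

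Two points deserve to be made airtight rather than gestured at. First, your converse direction (``every clique $B_j$ is such an interval'') rests entirely on uniqueness of the canonical decomposition; for the statement as written, which identifies $B_j$ as a specific subset of $[n]$, you need uniqueness of the decomposition as a partition of the vertex set, not merely up to isomorphism of the partitioned components --- this is true in Tyshkevich's setting but should be invoked explicitly, and it also requires the complementary implication that \emph{any} decomposition of a piece forces an Erd\H{o}s--Gallai equality (which again follows from Lemma~\ref{lem: EG diff counts this} by placing the clique of the putative decomposition in the top positions of the degree order, modulo reordering ties). Second, the bookkeeping for vertices whose degree equals a term of $EG(d)$, and for the terminal non-split $G_0$, is where the interval/strict-inequality boundary cases live; you have flagged these correctly, but the written proof would need to verify that after extracting those singletons each residual piece between consecutive cuts is genuinely indecomposable. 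Neither issue is a flaw in the strategy; they are the places where the deferred work actually resides.
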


Thus the condition $\Delta_k(d)=0$ in Theorem~\ref{thm: forced via EG diff} is intimately related to the composition operation $\circ$ and to the canonical decomposition. More generally, we now show that $\Delta_k(d)$ actually measures how far a realization of $d$ is from being a composition of the form described earlier, with slightly relaxed definitions of the sets $A$, $B$, and $C$. Given a subset $S$ of a vertex set of a graph, let $e(S)$ denote the number of edges in the graph having both endpoints in $S$, and let $\overline{e}(S)$ be the number of pairs of nonadjacent vertices in $S$. Given another vertex subset $T$, disjoint from $S$, let $e(S,T)$ denote the number of edges having an endpoint both in $S$ and in $T$, and let $\overline{e}(S,T)$ denote the number of pairs of nonadjacent vertices containing a vertex from each of $S$ and $T$.

\begin{lem}\label{lem: EG diff counts this}
Let $G$ be an arbitrary realization of a degree sequence $d=(d_1,\dots,d_n)$. Given fixed $k \in [n]$, let $B=\{i:1 \leq i \leq k\}$, and let $A$ and $C$ be disjoint sets such that $A \cup C = V(G)-B$, each vertex in $A$ has degree at most $k$, and each vertex in $C$ has degree at least $k$.  

The $k$th Erd\H{o}s--Gallai difference is given by \[\Delta_k(d) = 2e(A) + 2\overline{e}(B) + e(A,C) + \overline{e}(B,C).\]
\end{lem}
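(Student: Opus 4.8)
The plan is to compute $\LHS_k(d)$ and $\RHS_k(d)$ separately by counting the edges incident to the vertices of $B$ and of $A$, and then to convert the resulting edge counts into non-edge counts using the total numbers of pairs lying within $B$ and between $B$ and $C$.

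First I would rewrite $\LHS_k(d)=\sum_{\ell\le k} d_\ell$ as $\sum_{v\in B}\deg(v)$, which is legitimate because $B=\{1,\dots,k\}$ consists of exactly the $k$ largest-degree vertices. Classifying each edge incident to $B$ according to the location of its other endpoint gives
\[ \LHS_k(d) = 2e(B) + e(A,B) + e(B,C). \]
Next I would treat $\RHS_k(d)=k(k-1)+\sum_{\ell>k}\min\{k,d_\ell\}$. The indices $\ell>k$ correspond precisely to the vertices of $A\cup C$. For $v\in A$ we have $\deg(v)\le k$, so $\min\{k,\deg(v)\}=\deg(v)$, while for $v\in C$ we have $\deg(v)\ge k$, so $\min\{k,\deg(v)\}=k$. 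Hence
\[ \sum_{\ell>k}\min\{k,d_\ell\} = \sum_{v\in A}\deg(v) + k\lvert C\rvert = 2e(A)+e(A,B)+e(A,C)+k\lvert C\rvert, \]
so that $\RHS_k(d)=k(k-1)+2e(A)+e(A,B)+e(A,C)+k\lvert C\rvert$.

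Then I would subtract the two expressions. The $e(A,B)$ terms cancel, leaving
\[ \Delta_k(d) = 2e(A) + e(A,C) + \bigl(k(k-1)-2e(B)\bigr) + \bigl(k\lvert C\rvert - e(B,C)\bigr). \]
To finish, I would invoke complementary counting twice: since $B$ spans $\binom{k}{2}$ pairs, we have $e(B)+\overline{e}(B)=\binom{k}{2}$, whence $k(k-1)-2e(B)=2\overline{e}(B)$; and since there are $\lvert B\rvert\,\lvert C\rvert = k\lvert C\rvert$ pairs with one endpoint in each of $B$ and $C$, we have $e(B,C)+\overline{e}(B,C)=k\lvert C\rvert$, whence $k\lvert C\rvert - e(B,C)=\overline{e}(B,C)$. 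Substituting these two identities yields the claimed formula.

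I expect no serious obstacle; the statement is essentially a bookkeeping identity, and the whole argument is a pair of degree-sum counts followed by a complementation step. The only point that genuinely requires care is the treatment of vertices whose degree equals $k$ exactly, since the hypotheses permit such a vertex to lie in either $A$ or $C$. The argument must confirm that $\min\{k,d_\ell\}$ evaluates to the same value ($=k=\deg(v)$) under either assignment, so that the final formula is independent of how the degree-$k$ vertices are distributed between $A$ and $C$; I would remark on this explicitly when splitting the sum in the second step.
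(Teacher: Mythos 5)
Your proposal is correct and follows essentially the same route as the paper: sum the degrees over $B$ and over $A\cup C$, cancel the $e(A,B)$ terms, and convert $k(k-1)-2e(B)$ and $k|C|-e(B,C)$ into the non-edge counts $2\overline{e}(B)$ and $\overline{e}(B,C)$. Your explicit remark about vertices of degree exactly $k$ being assignable to either $A$ or $C$ is a nice touch the paper leaves implicit, but the argument is the same.
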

\begin{proof}
Observe that summing the degrees in $B$ yields $2e(B)+e(A,B)+e(B,C)$, and a similar statement holds for $A$. Then 
\begin{align*}
\Delta_k(d) &= k(k-1) + \sum_{\ell > k}\min\{k,d_\ell\} - \sum_{\ell \leq k} d_\ell\\
&= k(k-1) + \sum_{\ell \in A} d_\ell + k|C| - (2e(B)+e(A,B)+e(B,C))\\
&= 2\left(\binom{k}{2} - e(B)\right) + 2e(A) + e(A,C) + (|B||C| - e(B,C)),
\end{align*}
and the claim follows.
\end{proof}

Observe that Lemma~\ref{lem: EG diff counts this}, besides providing the corollary below, gives another illustration of the role that a value of $0$ or $1$ for $\Delta_k (d)$ has in producing forced edges and non-edges (as in Theorem~\ref{thm: forced via EG diff}) and in forcing the canonical decomposition structure (as in Theorem~\ref{thm: EG and canon decomp}).

\begin{cor} \label{cor: EG diff at least 2}
Let $d=(d_1,\dots,d_n)$ be a degree sequence. For all $k > m(d)$, we have $\Delta_k(d) \geq 2$.
\end{cor}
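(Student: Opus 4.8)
The plan is to apply Lemma~\ref{lem: EG diff counts this} and to extract the bound from a single one of its four nonnegative summands. Fix $k$ with $m(d) < k \leq n$ and let $G$ be any realization of $d$. Setting $B=\{i:1 \leq i \leq k\}$ and choosing any admissible partition of $V(G)-B$ into sets $A$ and $C$ as in the lemma, we have
\[
\Delta_k(d) = 2e(A) + 2\overline{e}(B) + e(A,C) + \overline{e}(B,C).
\]
Since each of the four terms on the right counts edges or non-edges and is therefore nonnegative, it suffices to show that $\overline{e}(B) \geq 1$; this alone forces $\Delta_k(d) \geq 2$.

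To see that $\overline{e}(B) \geq 1$, I would first translate the hypothesis $k > m(d)$ into a degree bound on vertex $k$. Because $d$ is nonincreasing, the set $\{i:d_i \geq i-1\}$ is downward closed: if $d_i \geq i-1$ and $j<i$, then $d_j \geq d_i \geq i-1 > j-1$. Hence this set is exactly $\{1,\dots,m(d)\}$, and $k > m(d)$ yields $d_k < k-1$, that is, $d_k \leq k-2$. Now vertex $k$ lies in $B$, and the remaining $k-1$ vertices of $B$ are $1,\dots,k-1$. Since $k$ has at most $d_k \leq k-2$ neighbors in all of $G$, it is nonadjacent to at least one of these $k-1$ vertices of $B$. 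This non-edge lies inside $B$, so $\overline{e}(B) \geq 1$, as needed.

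This argument is essentially immediate once Lemma~\ref{lem: EG diff counts this} is in hand, so there is no genuine obstacle; the only step requiring care is the bookkeeping that converts $m(d)<k$ into $d_k \leq k-2$, which relies on the monotonicity of $d$. It is worth observing that the estimate is sharp in the sense that the term $2\overline{e}(B)$ already supplies the value $2$, while the remaining three terms contribute whatever additional excess the chosen realization happens to carry.
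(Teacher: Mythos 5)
Your proof is correct and follows the paper's own argument: the paper likewise invokes Lemma~\ref{lem: EG diff counts this} and observes that $k>m(d)$ gives $d_k<k-1$, so the top $k$ vertices cannot form a clique, forcing $2\overline{e}(B)\geq 2$. Your version merely spells out the bookkeeping (that vertex $k$ misses at least one of the other $k-1$ vertices of $B$) that the paper leaves implicit.
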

\begin{proof}
Since $k>m(d)$, we know that $d_k<k-1$, so any set $B$ of $k$ vertices of highest degree in a realization of $d$ cannot form a clique; thus $\Delta_k(d) \geq 2$ by Lemma~\ref{lem: EG diff counts this}.
\end{proof}

We now use our results in Section~\ref{sec: degree conditions} to determine $I(d)$ and $U(d)$. We begin with a quick observation and some definitions we will use throughout the theorem and its proof. 

\begin{obs}\label{obs: two vtcs}
If an indecomposable canonical component $(G_i,A_i,B_i)$ has more than one vertex, then both $A_i$ and $B_i$ must have at least two vertices.
\end{obs}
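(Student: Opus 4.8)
The plan is to prove the contrapositive: assuming an indecomposable canonical component $(G_i,A_i,B_i)$ has more than one vertex, I would show that if either $A_i$ or $B_i$ were to have at most one vertex, then $G_i$ would be decomposable (or, in the degenerate single-vertex cases, that the component could not have been recorded as having more than one vertex). First I would recall from Theorem~\ref{thm: EG and canon decomp} that the clique $B_i$ arises as a block $\{\ell : t < \ell \le t'\}$ between consecutive terms $t,t'$ of $EG(d)$, and the independent set $A_i$ is $\{\ell \in [n] : t < d_\ell < t'\}$. The two key degenerate cases to rule out are $|B_i| = 1$ (equivalently $t' = t+1$, so the clique is a single vertex) and $|A_i| = 0$ (no vertex has degree strictly between $t$ and $t'$).

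The main line of argument is to observe that a split graph with a one-element independent set or a one-element clique is itself canonically decomposable as a composition $(F,A,B)\circ H$, contradicting indecomposability. For the case $|A_i| \le 1$: if $A_i$ is empty then $G_i$ is a clique on $B_i$, which for $|B_i| \ge 2$ splits off a single vertex as $(K_1, \emptyset, \{v\}) \circ K_{|B_i|-1}$; if $|A_i| = 1$ with $|B_i| \ge 1$, I would exhibit the single independent vertex, together with its (empty) adjacency into the clique under the split partition, as a trivial split factor that peels off. Symmetrically, for $|B_i| \le 1$: if $|B_i| = 1$ and $A_i$ is nonempty, the single clique vertex is adjacent to all of $A_i$ inside the component, so $(G_i, A_i, B_i)$ factors as $(F, A, B) \circ H$ where $F$ is that clique vertex together with a suitable piece of $A_i$, again forcing decomposability. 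In each case the point is that one of the two defining parts being too small lets the composition operation $\circ$ strip away a proper nonempty factor.

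Alternatively, and perhaps more cleanly, I would argue directly in the language of Theorem~\ref{thm: EG and canon decomp}. A component with more than one vertex corresponds to consecutive terms $t,t'$ of $EG(d)$ with $t' > t+1$ and with at least one vertex of intermediate degree, or it is excluded by the final sentence of that theorem (if $\ell > t$ and $d_\ell = t$ the component is a single vertex). So the hypothesis ``more than one vertex'' already forces $t' \ge t+2$ and the existence of some $\ell$ with $t < d_\ell < t'$; the first gives $|B_i| = t' - t \ge 2$ and the second gives $|A_i| \ge 1$. To upgrade $|A_i| \ge 1$ to $|A_i| \ge 2$, I would use indecomposability together with Observation-style symmetry: a split component whose independent set is a single vertex $a$ can be rewritten with $a$ removed and reattached as a one-vertex composition factor, so a genuinely indecomposable multi-vertex component cannot have $|A_i| = 1$, and dually cannot have $|B_i| = 1$.

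The step I expect to be the main obstacle is verifying carefully that the ``too small'' cases genuinely yield a valid composition in the precise sense of the definition of $\circ$ — that is, exhibiting the partitioned split factor $(F,A,B)$ and the complementary graph $H$ with the correct edge set between $B$ and $V(H)$, and checking that both factors are nonempty induced subgraphs so that the decomposition is canonical rather than trivial. The potential subtlety is the interaction between the local split partition of $G_i$ and the global adjacency structure dictated by the composition; since $G_i$ is an indecomposable \emph{component}, its internal edges are exactly those of a split graph on $A_i \cup B_i$, so I would need to confirm that peeling off a single vertex respects that the clique $B_i$ is complete to everything outside the component anyway, leaving the within-component split structure as the only thing to check. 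I expect this verification to be short once the correct factor is identified, so the real work is choosing the right vertex to strip and matching it to the formal definition of decomposability.
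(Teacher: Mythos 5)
Your overall strategy---showing that a component with a part of size at most one must be decomposable by peeling off a single-vertex split factor---is the natural one (the paper states this observation without proof, so there is no written argument to compare against). However, both of your degenerate cases rest on adjacency claims that are not justified and are false in general. In the case $|A_i|=1$ you assert that the lone independent vertex has ``(empty) adjacency into the clique,'' and in the case $|B_i|=1$ you assert that the lone clique vertex ``is adjacent to all of $A_i$ inside the component.'' Neither follows from the definition of a split partition: an independent-set vertex may be adjacent to any subset of the clique. For instance, in the path $a\,b_1\,b_2$ with split partition $A=\{a\}$, $B=\{b_1,b_2\}$, the lone independent vertex $a$ is adjacent to $b_1$, so it cannot be stripped off as an independent singleton factor; the valid decomposition instead peels off the dominating vertex $b_1$. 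As written, the composition $(F,A,B)\circ H$ you propose has the wrong edge set between $V(F)$ and $V(H)$, so the verification you yourself flag as the main obstacle genuinely fails.

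The repair is a short dichotomy, using the fact that any graph on at least two vertices with an isolated or dominating vertex $v$ is decomposable via $(K_1,\{v\},\emptyset)\circ(G_i-v)$ or $(K_1,\emptyset,\{v\})\circ(G_i-v)$, respectively. If $A_i=\{a\}$: either $a$ has a neighbor $b\in B_i$, in which case $b$ is adjacent to all of $B_i\setminus\{b\}$ and to $a$, hence dominating; or $a$ has no neighbor and is isolated. Dually, if $B_i=\{b\}$: either $b$ is adjacent to all of $A_i$ and is dominating, or some $a\in A_i$ is nonadjacent to $b$ and hence isolated. The cases $A_i=\emptyset$ and $B_i=\emptyset$ go through as you describe. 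In every case $G_i$ is decomposable, contradicting indecomposability. Your ``alternative'' route has the same soft spot---upgrading $|A_i|\ge 1$ to $|A_i|\ge 2$ again peels off the lone independent vertex without checking that it is isolated---and its claim that more than one vertex forces $t'\ge t+2$ is not supplied by the final sentence of Theorem~\ref{thm: EG and canon decomp}; it instead needs the remark that $A_i=\{\ell: t<d_\ell<t'\}$ is empty when $t'=t+1$ because degrees are integers.
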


Let $G$ be a graph with degree sequence $d=(d_1,\dots,d_n)$ on vertex set $[n]$, and suppose that $G$ has canonical decomposition $(G_k,A_k,B_k)\circ\dots\circ(G_1,A_1,B_1)\circ G_0$.

Let $p$ be the last element of $EG(d)$, and let $q$ be the largest value of $k$ for which $\Delta_k(d) \leq 1$. If $G_0$ is split, let $A_0,B_0$ be a partition of $G_0$ into an independent set and a clique, respectively. If $G_0$ is not split, then define \begin{align*}
B'_0 &= \{i \in [n]: p < i \leq q\};\\
A'_0 &= V(G_0) - B'_0;\\
A''_0 &= \{i \in [n] : i > q \text{ and } p < d_i < q \}; \\
B''_0 &= V(G_0) - A''_0.
\end{align*}

Further let $C_1$ (respectively $C_2$) denote an abstract split canonical component consisting of a single vertex lying in the independent set of the component (in the clique of the component). For $i \in \{1,2\}$ and $j$ a natural number, let $C_i^{j}$ represent the expression $C_i \circ \dots \circ C_i$, where there are $j$ terms $C_i$ in the composition.

\begin{thm}\label{thm: envelope formulas}
Given the graph $G$ with degree sequence $d$, with the canonical components of $G$ and other sets as defined above, 
the graph $I(d)$ is isomorphic to 
\[C_1^{|A_k|} \circ C_2^{|B_k|}\circ \dots \circ C_1^{|A_{1}|} \circ C_2^{|B_{1}|} \circ C_1^{|A_0|}\circ C_2^{|B_0|}\] if $G$ is split, and to 
\[C_1^{|A_k|} \circ C_2^{|B_k|}\circ \dots \circ C_1^{|A_{1}|} \circ C_2^{|B_{1}|} \circ C_1^{|A'_0|}\circ C_2^{|B'_0|}\] otherwise.

Similarly, the graph $U(d)$ is isomorphic to
\[C_2^{|B_k|} \circ C_1^{|A_k|} \circ \dots \circ C_2^{|B_{1}|} \circ C_1^{|A_{1}|} \circ C_2^{|B_0|} \circ C_1^{|A_0|}\] if $G$ is split, and to \[C_2^{|B_k|} \circ C_1^{|A_k|} \circ \dots \circ C_2^{|B_{1}|} \circ C_1^{|A_{1}|} \circ C_2^{|B''_0|}\circ C_1^{|A''_0|}\] otherwise.
\end{thm}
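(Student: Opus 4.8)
The plan is to determine $I(d)$ and $U(d)$ by combining the pairwise characterization in Theorem~\ref{thm: forced via EG diff} with the canonical decomposition structure from Theorem~\ref{thm: EG and canon decomp}, showing that the envelope graphs are exactly the threshold graphs obtained by ``refining'' each canonical component into single vertices while preserving the clique/independent-set ordering dictated by the values of $\Delta_k(d)$. Since Theorem~\ref{thm: envelopes are thresholds} already guarantees that $I(d)$ and $U(d)$ are threshold graphs, and a threshold graph is determined up to isomorphism by the sequence of decisions (add an isolated vertex versus a dominating vertex) in its standard construction, it suffices to identify, for the vertices taken in the order $1,\dots,n$ of nonincreasing degree, which vertices are forcibly dominating and which are forcibly isolated in the envelope graph. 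The composition notation $C_2^{|B_j|}\circ C_1^{|A_j|}$ is precisely the language for recording such a construction sequence, so the task reduces to bookkeeping which block each vertex falls into.

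First I would handle $I(d)$. Using Theorem~\ref{thm: forced via EG diff}, I would argue that the forced edges of $d$ are exactly those captured by conditions (1) and (2), and that these conditions, read through Theorem~\ref{thm: EG and canon decomp}, partition the vertices into the clique blocks $B_j$ (whose vertices are pairwise forcibly adjacent and forcibly adjacent to everything of higher degree) and the independent blocks $A_j$ (pairwise forcibly nonadjacent in $I(d)$, since within a component only the clique-to-everything edges are forced). The key point is that each term $t<\ell\le t'$ between consecutive elements of $EG(d)$ gives a genuine clique block, while each $A_j$ collapses to $C_1^{|A_j|}$ because no two of its vertices are forcibly adjacent. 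I would then verify that stringing these blocks together in decreasing-degree order reproduces the displayed composition, and that when $G$ is not split the final indecomposable component $G_0$ contributes $C_1^{|A_0'|}\circ C_2^{|B_0'|}$: here $q$ (the largest $k$ with $\Delta_k\le 1$) marks exactly how far into $G_0$ the forced edges extend, via condition (1), so $B_0'=\{i:p<i\le q\}$ is the forcibly-adjacent portion of the last component and $A_0'$ is the rest. The split case is the clean special case where $G_0$ itself already splits as $A_0,B_0$ and $p=m(d)$ by Theorem~\ref{thm: split seqs}.

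The argument for $U(d)$ is the complementary dual: by the proof of Theorem~\ref{thm: forced iff not graphic}, non-edges of $U(d)$ correspond to forced edges of $\overline{d}$, so I would either run the mirror-image analysis using conditions (3) and (4), or invoke complementation directly, noting that complementing a composition reverses the roles of clique and independent set and reverses the block order --- which is exactly why the $U(d)$ formulas list $C_2^{|B_j|}\circ C_1^{|A_j|}$ (clique-first) rather than the $C_1\circ C_2$ order of $I(d)$. For the non-split case the relevant boundary is governed by the forced non-edges reaching into $G_0$, producing $A_0''$ and $B_0''$ via the degree-window $p<d_i<q$ in condition analogous to (4), matching the stated $C_2^{|B_0''|}\circ C_1^{|A_0''|}$.

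I expect the main obstacle to be the treatment of the single indecomposable component $G_0$ when $G$ is not split, and in particular reconciling the two different boundary parameters $p$ and $q$. Away from $G_0$ every block boundary is an element of $EG(d)$ and Theorem~\ref{thm: EG and canon decomp} hands us the block structure directly; but inside $G_0$ there is no further $\Delta_k=0$ to exploit, and I must instead use the weaker inequality $\Delta_k(d)\le 1$ from conditions (1) and (3) --- together with Corollary~\ref{cor: EG diff at least 2}, which pins down that such $k$ cannot exceed $m(d)$ --- to show that exactly the vertices indexed $p<i\le q$ become forcibly dominating in $I(d)$ (and dually for $U(d)$). Verifying that $A_0',B_0'$ (and $A_0'',B_0''$) are correctly delimited, that Observation~\ref{obs: two vtcs} rules out degenerate miscounts, and that the resulting block sizes sum correctly to $|V(G_0)|$ is where the care is needed; the rest is a matter of transcribing the forced-pair data into the composition notation.
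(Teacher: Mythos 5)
Your plan is correct and follows essentially the same route as the paper: classify every pair as forced or unforced via Theorem~\ref{thm: forced via EG diff} read through the canonical-decomposition dictionary of Theorem~\ref{thm: EG and canon decomp}, with the levels $p$ and $q$ (where $\Delta_k \le 1$) governing the extra blocks $B_0'$, $A_0''$ inside a non-split $G_0$, and then transcribe the resulting forced-pair data into the composition notation. The one place your sketch is noticeably thinner than the paper's argument is the verification that a pair $i\in B_j$, $i'\in A_j$ inside a single indecomposable split component is \emph{unforced} (neither a forced edge nor a forced non-edge): the paper must check that all four conditions of Theorem~\ref{thm: forced via EG diff} fail, using indecomposability of $G_j$ together with Lemma~\ref{lem: EG diff counts this} to rule out condition (1), and this step is at least as delicate as the $G_0$ bookkeeping you correctly flag as the main obstacle.
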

\begin{proof}
By definition, $\Delta_q\leq 1$, and by Theorem~\ref{thm: EG and canon decomp}, it follows that each vertex $i \in [n]$ of $G$ belonging to a set $B_j$ for $j \geq 0$ satisfies $i \leq q$. Theorem~\ref{thm: forced via EG diff}(1) implies that any two vertices in a clique $B_j$ are joined by a forced edge, as are any two vertices in $B'_0$, if $G_0$ is not split.

Consider any two vertices $i,i'$ belonging to the set $A_j$ for $j \geq 0$. By Observation~\ref{obs: two vtcs}, $B_j$ must be nonempty, so it follows from Theorem~\ref{thm: EG and canon decomp} that $d_i<p$ and $d_{i'}<p$. Since $i,i'$ do not belong to $B_\ell$ for any $\ell$, Theorem~\ref{thm: EG and canon decomp} also implies that $i,i' > p$, so by Theorem~\ref{thm: forced via EG diff}(3) the pair $\{i,i'\}$ is a forced non-edge. Similarly, any two vertices in $A''_0$ are forcibly nonadjacent.

Now suppose that vertices $i,i'$ satisfy  $i \in B_j$ and $i' \in V(G_\ell)$, with $\ell < j$. From the adjacencies required by the canonical decomposition we see that $d_{i'}$ is at least as large as $|B_k \cup B_{k-1} \cup \dots \cup B_j|$, and it follows from Theorem~\ref{thm: EG and canon decomp} that this latter number equals a term $t'$ of $EG(d)$ for which $i \leq t'$. By Theorem~\ref{thm: forced via EG diff}(2), the pair $\{i,i'\}$ is a forced edge.

Suppose instead that vertices $i,i'$ satisfy  $i \in A_j$ and $i' \in V(G_\ell)$, with $\ell < j$. Again letting $t'=|B_k \cup B_{k-1} \cup \dots \cup B_j|$, Theorem~\ref{thm: EG and canon decomp} implies that $\Delta_{t'}(d)=0$ and that $i,i'>t'$. The adjacencies of the canonical decomposition imply that $d_{i'} \geq t'$ and that $d_{i} \leq t'$. Theorem~\ref{thm: forced via EG diff}(4) then implies that $\{i,i'\}$ is a forced non-edge.

We now show that all other pairs of vertices in $G$ are unforced, beginning with those within a split canonical component. Suppose that $i \in B_j$ and $i' \in A_j$ for some $j \geq 0$. Any neighbor of $i'$ in $G$ other than $i$ is a neighbor of $i$; furthermore, since $G_j$ is indecomposable, $i'$ has at least one non-neighbor in $B_j$, which must be a neighbor of $i$, so we conclude that $d_i > d_{i'}$ and $i<i'$. Now by Theorem~\ref{thm: EG and canon decomp}, there exist consecutive terms $t$ and $t'$ of $EG(d)$ such that $t < i \leq t'$ and $t < d_{i'} < t'$.

We verify that none of the conditions in Theorem~\ref{thm: forced via EG diff} are satisfied by the pair $\{i,i'\}$. First, since $G_j$ is indecomposable, vertex $t'$ must have at least one neighbor in $A_j$, so $d_{t'} \geq t'$. Thus $i'>t'$, and since $d_{i'}<t'$, we see that $\{1,\dots,i'\}$ is not a clique, so by Lemma~\ref{lem: EG diff counts this} we see that $\Delta_\ell(d) \geq 2$ for all $\ell \geq i'$. Thus condition (1) of Theorem~\ref{thm: forced via EG diff} does not apply to the pair $\{i,i'\}$.

Condition (2) does not apply, since $t'$ is the smallest term of $EG(d)$ at least as large as $i$, and $d_{i'}<t'$. Condition (3) likewise cannot apply, since $\{1,\dots,t'\}$ is a clique and hence $d_i \geq i-1$. Finally, since $t'$ is the smallest term of $EG(d)$ at least as large as $d_{i'}$, but $i \leq t'$, condition (4) does not apply.

It remains to show that $\{i,i'\}$ is unforced if $G_0$ is not split and vertices $i,i' \in V(G_0)$ don't both belong to $B'_0$ or both belong to $A''_0$. Assume that $i<i'$.

If at least one of $i,i'$ does not belong to $B'_0$, then we claim that $\{i,i'\}$ cannot be a forced edge. Indeed, note that $i'>q$ and $i>p$, so neither of conditions (1) or (2) of Theorem~\ref{thm: forced via EG diff} applies. 

If at least one of $i,i'$ does not belong to $A''_0$, then we claim that $\{i,i'\}$ is not a forced non-edge. Indeed, note that $d_i \geq d_{i'}$, and since $G_0$ is indecomposable and has more than one vertex, we have $d_{i'} > p$; this implies that $\{i,i'\}$ fails condition (4). We also see that $i \leq q$ or $d_i \geq q$; in either case condition (3) does not apply.

Having characterized all pairs of vertices as forced or unforced, we can now summarize the structure of $I(d)$ and $U(d)$. If we form a correspondence between each vertex in $A_j$ (respectively, in $A'_0$, in $A''_0$, in $B_j$, in $B'_0$, in $B''_0$) with a vertex of $C_1^{|A_j|}$ (of $C_1^{|A'_0|}$, of $C_1^{|A''_0|}$, of $C_2^{|B_j|}$, of $C_2^{|B'_0|}$, of $C_2^{|B''_0|}$) in the claimed expressions for $I(d)$ and $U(d)$, the correspondence naturally leads to an exact correspondence between the edges in either of the first two expressions and the edges in $I(d)$. Likewise, the edges in the third and fourth expressions in the theorem statement correspond precisely to the edges in $U(d)$.
\end{proof}

A well known and useful characterization of threshold graphs (see~\cite[Theorem 1.2.4]{MahadevPeled95}) states that a graph is threshold if and only if it can be constructed from a single vertex by iteratively adding dominating and/or isolated vertices. The expressions in Theorem~\ref{thm: envelope formulas} describe how the envelope graphs of $d$ can be constructed in this way: because of the requirements of the operation $\circ$, as we read from right to left, a term $C_1^a$ corresponds to adding $a$ isolated vertices in sequence, and a term $C_2^b$ corresponds to adding $b$ dominating vertices.

\begin{exa}
If $d$ is the degree sequence of the graph on the right in Figure~\ref{fig: composition}, then $I(d)$ is formed by starting with a single vertex, adding three more isolated vertices in turn, adding two dominating vertices, and finishing with three more isolated vertices. The graph $U(d)$ is formed by starting with a single vertex, adding three more dominating vertices, three more isolated vertices, and then two more dominating vertices.
\end{exa}

Note that if $d$ is threshold, then $I(d)=U(d)=d$ (as expected), because every canonical component $(G_i,A_i,B_i)$ of a threshold graph contains only a single vertex (we see this from the dominating/isolated vertex construction described above), so for all $i$ either $A_i$ or $B_i$ is empty, and the expressions in Theorem~\ref{thm: envelope formulas} simplify to return the canonical decomposition of the unique realization of $d$.

\section{Threshold graphs and the dominance order}

In this section we compare the forcible adjacency relationships of degree sequences that are comparable under the dominance order.

Given lists $a=(a_1,\dots,a_k)$ and $b=(b_1,\dots,b_\ell)$ of nonnegative integers, with $a_1 \geq \dots \geq a_k$ and $b_1 \geq \dots \geq b_\ell$, we say that $a$ \emph{majorizes} $b$ and write $a \succeq b$ if $\sum_{i=1}^k a_i = \sum_{i=1}^\ell b_i$ and for each $j \in \{1,\dots,\min(k,\ell)\}$ we have $\sum_{i=1}^j a_i \geq \sum_{i=1}^j b_i$. The partial order induced by $\succeq$ on lists of nonnegative integers with a fixed sum $s$ and length $n$ is called the \emph{dominance} (or \emph{majorization}) \emph{order}, and we denote the associated partially ordered set by $\mathcal{P}_{s,n}$.

(We remark that requiring sequences to have the same length and allowing terms to equal 0 are slight departures from how the dominance poset is often described. We do so here for convenience in the statements of results below.)

The dominance order plays an important role in the study of graphic lists. It is known that if for $a,b \in \mathcal{P}_{s,n}$ it is true that $a$ is graphic and $a \succeq b$, then $b$ is also graphic; thus the degree sequences form an ideal in $\mathcal{P}_{s,n}$. The maximal graphic lists are precisely the threshold sequences~\cite{PeledSrinivasan89}.

We define a \emph{unit transformation} on a nonincreasing integer sequence to be the act of decreasing a sequence term by 1 and increasing an earlier term by 1 while maintaining the descending order of terms. This operation is best illustrated by the Ferrers diagram of the sequences, where sequence terms are depicted by left-justified rows of dots. Note that if $a$ results from a unit tranformation on $b$, then the Ferrers diagram of $a$ differs from that of $b$ by the removal of a dot from one row of $b$ to a row higher up in the diagram. In Figure~\ref{fig: elem trans}, the second and third sequences each result from a unit transformation on the first sequence.
\begin{figure}
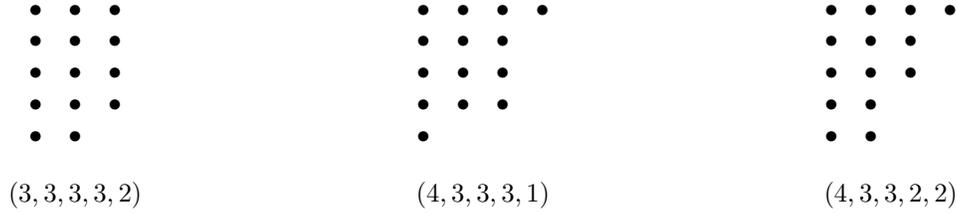

\centering
\begin{minipage}{0.3\textwidth}
\centering
\[\begin{array}{ccc}
\bullet & \bullet & \bullet\\
\bullet & \bullet & \bullet\\
\bullet & \bullet & \bullet\\
\bullet & \bullet & \bullet\\
\bullet & \bullet & \\
\end{array}\]
\[(3,3,3,3,2)\]
\end{minipage}
\quad
\begin{minipage}{0.3\textwidth}
\centering
\[\begin{array}{cccc}
\bullet & \bullet & \bullet & \bullet\\
\bullet & \bullet & \bullet & \\
\bullet & \bullet & \bullet & \\
\bullet & \bullet & \bullet & \\
\bullet &  &  & \\
\end{array}\]
\[(4,3,3,3,1)\]
\end{minipage}
\quad
\begin{minipage}{0.3\textwidth}
\centering
\[\begin{array}{cccc}
\bullet & \bullet & \bullet & \bullet\\
\bullet & \bullet & \bullet & \\
\bullet & \bullet & \bullet & \\
\bullet & \bullet &  & \\
\bullet & \bullet &  & \\
\end{array}\]
\[(4,3,3,2,2)\]
\end{minipage}
\caption{Ferrers diagrams depicting elementary transformations}\label{fig: elem trans}
\end{figure}

A fundamental lemma due to Muirhead~\cite{Muirhead03} says that $a \succeq b$ if and only if $a$ may be obtained by performing a sequence of unit transformations on $b$.

\begin{thm} \label{thm: majorization preserves forced}
Let $d$ and $e$ be graphic elements of $\mathcal{P}_{s,n}$. If $d \succeq e$ and $\{i,j\}$ is a forced pair for $e$, then $\{i,j\}$ is a forced pair for $d$.
\end{thm}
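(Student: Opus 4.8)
The plan is to handle one forced pair at a time and reduce everything to the non-graphicality test of Theorem~\ref{thm: forced iff not graphic}, together with the fact recalled above that the graphic lists form a downward-closed ideal in $\mathcal{P}_{s,n}$, so that \emph{non}-graphicality is preserved upward under $\succeq$. Since a forced pair is by definition either a forced edge or a forced non-edge, it suffices to treat these two cases. Suppose first that $\{i,j\}$ is a forced edge for $e$. By Theorem~\ref{thm: forced iff not graphic} the list $e^+(i,j)$ is not graphic, and I want to conclude the same for $d^+(i,j)$. Observe that $e^+(i,j)$ and $d^+(i,j)$ arise from $e$ and $d$ by adding $1$ in the same two coordinates $i,j$; both have sum $s+2$ and length $n$, hence both lie in $\mathcal{P}_{s+2,n}$. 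Thus if I can show that the nonincreasing rearrangement of $d^+(i,j)$ majorizes that of $e^+(i,j)$, then upward-closure of non-graphicality finishes this case. The forced non-edge case is entirely symmetric, subtracting $1$ in coordinates $i,j$ and comparing $d^-(i,j)$ with $e^-(i,j)$.

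The heart of the argument is therefore the following stability statement: if $x\succeq y$ are nonincreasing lists of equal length and sum, indexed by position, and $v\in\{0,1\}^n$, then $\operatorname{sort}(x+v)\succeq \operatorname{sort}(y+v)$ and $\operatorname{sort}(x-v)\succeq \operatorname{sort}(y-v)$, where $\operatorname{sort}$ denotes the nonincreasing rearrangement. I expect this to be the main obstacle, precisely because the naive hope ``$x\succeq y$ implies $x+v\succeq y+v$'' is \emph{false} for general integer vectors $v$: adding a common vector does not respect $\succeq$. What rescues us is that the perturbation changes each coordinate by exactly $1$. To prove the statement I would invoke Muirhead's lemma to reduce to the case in which $x$ is obtained from $y$ by a single unit transformation, say $x=y+\mathbf{1}_{\{p\}}-\mathbf{1}_{\{q\}}$ with $p<q$, and then chain the resulting one-step majorizations by transitivity. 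Writing $w=y\pm v$, we have $x\pm v=w+\mathbf{1}_{\{p\}}-\mathbf{1}_{\{q\}}$, so the claim reduces to: transferring one unit from coordinate $q$ to coordinate $p$ produces a list whose sorted form majorizes $\operatorname{sort}(w)$.

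For this last step I would split on the values $w_p,w_q$. If $w_p\ge w_q$, the transfer raises a larger entry and lowers a smaller one, a spreading move that replaces the multiset $\{w_p,w_q\}$ by the dominating multiset $\{w_p+1,w_q-1\}$; hence $\operatorname{sort}(w+\mathbf{1}_{\{p\}}-\mathbf{1}_{\{q\}})\succeq\operatorname{sort}(w)$. The only remaining possibility is $w_p<w_q$. Since $y_p\ge y_q$ (because $y$ is nonincreasing and $p<q$) and $v$ is a $0/1$ vector, a short check shows this can occur only when $y_p=y_q$ and the two relevant coordinates of $v$ differ; in that situation $\{w_p,w_q\}$ and $\{w_p+1,w_q-1\}$ are the \emph{same} multiset, so the transfer leaves $\operatorname{sort}(w)$ unchanged. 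In either case $\operatorname{sort}(x\pm v)\succeq\operatorname{sort}(y\pm v)$, which proves the lemma.

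Finally I would assemble the pieces. Taking $v=\mathbf{1}_{\{i,j\}}$ and applying the lemma to $d\succeq e$ yields $\operatorname{sort}(d^+(i,j))\succeq\operatorname{sort}(e^+(i,j))$ and $\operatorname{sort}(d^-(i,j))\succeq\operatorname{sort}(e^-(i,j))$; combined with Theorem~\ref{thm: forced iff not graphic} and the upward-closure of non-graphicality, a forced edge (respectively, forced non-edge) for $e$ remains one for $d$. One genuinely degenerate point must be checked in the non-edge direction: $e^-(i,j)$ could have a negative entry, which happens exactly when $e_j=0$. In that case the partial-sum inequalities defining $d\succeq e$ force $\sum_{\ell\le j}d_\ell=\sum_{\ell\le j}e_\ell=s$ and $\sum_{\ell\le j-1}d_\ell=s$, so $d_j=0$ as well; then $d^-(i,j)$ is also not graphic, and the desired conclusion still holds.
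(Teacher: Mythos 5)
Your proof is correct, and while it shares the paper's overall skeleton (reduce to the non-graphicality test of Theorem~\ref{thm: forced iff not graphic}, then use Muirhead to reduce to a single unit transformation), the key step is executed quite differently. The paper never establishes a majorization between the modified sequences: instead it takes the one Erd\H{o}s--Gallai inequality that $e^{+}(i,j)$ violates at some index $k$ and checks directly that $d^{+}(i,j)$ violates the same inequality, via the chain $\LHS_k(d^+(i,j)) \geq \LHS_k(e^+(i,j)) > \RHS_k(e^+(i,j)) \geq \RHS_k(d^+(i,j))$. You instead prove the stronger statement that $\operatorname{sort}(d^{\pm}(i,j)) \succeq \operatorname{sort}(e^{\pm}(i,j))$ --- via the transfer/spreading-move case analysis, with the observation that the only problematic case $w_p < w_q$ degenerates to an identical multiset --- and then quote the fact that graphic lists form an ideal in the dominance order, so non-graphicality propagates upward. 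Your route costs a short lemma but buys a cleaner conceptual statement, avoids re-deriving the inequality $\sum_{\ell>k}\min\{k,e^+(i,j)_\ell\} \geq \sum_{\ell>k}\min\{k,d^+(i,j)_\ell\}$, and is more careful on two points the paper elides: the modified lists need not be nonincreasing (you sort before comparing, whereas the paper applies the Erd\H{o}s--Gallai criterion to possibly unsorted lists), and $e^{-}(i,j)$ may have a negative entry when $e_j=0$, which you dispose of correctly by forcing $d_j=0$ from the partial-sum equalities.
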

\begin{proof}
We may obtain $d$ from a sequence of unit transformations on the sequence $e$. The intermediate sequences resulting from these individual transformations all majorize $e$, so it suffices to assume that $d$ can be obtained from just one unit transformation. In other words, we assume that there exist indices $s$ and $t$ such that $s<t$ and \[ d_{\ell} = \begin{cases}
e_\ell+1 & \ell = s\\
e_\ell-1 & \ell = t\\
e_\ell & \text{otherwise}
\end{cases}.
\]
Suppose now that $\{i,j\}$ is a forced edge for $e$. By Theorem~\ref{thm: forced iff not graphic} $e^+(i,j)$ is not graphic, so by Theorem~\ref{thm: ErdosGallai} there exists an index $k$ such that $k \geq i$ and \[\sum_{\ell \leq k} e^+(i,j)_\ell > k(k-1) + \sum_{\ell>k} \min\{k, e^+(i,j)_\ell\}.\] Since the actions of increasing two terms of a sequence and performing a unit transformation on a sequence together yield the same result regardless of the order in which they are carried out, we have
\begin{align*}
\sum_{\ell \leq k} d^+(i,j)_\ell &\geq \sum_{\ell \leq k} e^+(i,j)_\ell\\
&> k(k-1) + \sum_{\ell>k} \min\{k, e^+(i,j)_\ell\}\\
&\geq k(k-1) + \sum_{\ell>k} \min\{k, d^+(i,j)_\ell\}.
\end{align*}
Thus $d^+(i,j)$ is not graphic, and by Theorem~\ref{thm: forced iff not graphic} $\{i,j\}$ is a forced edge for $d$.

A similar argument holds if $\{i,j\}$ is a forced non-edge for $e$, making $\{i,j\}$ a forced non-edge for $d$.
\end{proof}

\begin{exa}
The degree sequence $(2,1,1,1,1)$ is majorized by $(2,2,1,1,0)$, which is in turn majorized by $(3,1,1,1,0)$. The first sequence has has no forced pair. In the second sequence vertex 5 is forcibly nonadjacent to all other vertices, $\{3,4\}$ is a forced non-edge, and $\{1,2\}$ is a forced edge. These relationships are all preserved in $(3,1,1,1,0)$, and every other pair of vertices is forced as well, since $(3,1,1,1,0)$ is a threshold sequence.
\end{exa}

As illustrated in the previous example, forcible adjacency relationships come into existence as we progress upward in the dominance order, and they persist until the threshold sequences are reached, where every pair of vertices is a forced pair. Thus the proportion of all vertex pairs that are forced may be considered a measure of how close a degree sequence is to being a threshold sequence.

Our results now yield a consequence of Merris~\cite[Lemma 3.3]{Merris03}. We call a degree sequence \emph{split} if it has a realization that is a split graph.

\begin{cor}\label{cor: splits upward closed}
Let $d$ and $e$ be graphic elements of $\mathcal{P}_{s,n}$. If $d \succeq e$ and $e$ is split, then $d$ is split.
\end{cor}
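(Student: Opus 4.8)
The plan is to characterize split sequences through their forced pairs and then invoke Theorem~\ref{thm: majorization preserves forced}. The key observation is that splitness is witnessed by a complete collection of forced edges and forced non-edges: I claim that if $e$ is split and we set $k=m(e)$, then every pair $\{i,j\}$ with $i<j\le k$ is a forced edge for $e$, and every pair $\{i,j\}$ with $k<i<j$ is a forced non-edge for $e$.

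To prove this claim, first note that since $e$ is split, Theorem~\ref{thm: split seqs} gives $\Delta_k(e)=0$, where $k=m(e)$. For a pair with $i<j\le k$, condition~(1) of Theorem~\ref{thm: forced via EG diff} applies (as $\Delta_k(e)=0\le 1$ and $j\le k$), so $\{i,j\}$ is a forced edge. For a pair with $k<i<j$, I would verify the hypotheses of condition~(3): plainly $k<i$, and because $i>k=m(e)$ the nonincreasing sequence satisfies $e_i\le e_{k+1}\le k-1<k$, so $e_i<k<i$; hence $\{i,j\}$ is a forced non-edge. This establishes the claim.

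With the claim in hand, the corollary follows quickly. Since $d\succeq e$, Theorem~\ref{thm: majorization preserves forced} preserves each of these forced pairs, and its proof carries forced edges to forced edges and forced non-edges to forced non-edges, so the \emph{type} of each pair is retained. Consequently, in $d$ every pair inside $\{1,\dots,k\}$ is again a forced edge and every pair inside $\{k+1,\dots,n\}$ is again a forced non-edge. By the definitions of forced edge and forced non-edge, this means that in \emph{every} realization of $d$ the set $\{1,\dots,k\}$ induces a clique and the set $\{k+1,\dots,n\}$ induces an independent set; any such realization is therefore a split graph, and in particular $d$ is split.

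The only real work is the forced-pair characterization of splitness, and within it the main subtlety is confirming that the degree inequality $e_i<k$ holds for all $i>k$ so that condition~(3) genuinely fires across the entire independent part; the clique part is immediate from condition~(1). The transfer and conclusion steps are then essentially bookkeeping, relying on the type-preserving nature of Theorem~\ref{thm: majorization preserves forced}.
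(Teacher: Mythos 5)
Your proof is correct and follows essentially the same route as the paper: apply Theorem~\ref{thm: split seqs} to get $\Delta_{m(e)}(e)=0$, use Theorem~\ref{thm: forced via EG diff} to see that $\{1,\dots,m(e)\}$ is pairwise forcibly adjacent and $\{m(e)+1,\dots,n\}$ pairwise forcibly nonadjacent, and transfer these to $d$ via Theorem~\ref{thm: majorization preserves forced}. Your explicit verification of condition~(3) (that $e_i<k$ for $i>m(e)$) and your remark that the majorization theorem preserves the \emph{type} of each forced pair are details the paper leaves implicit, but the argument is the same.
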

\begin{proof}
As usual, let $G$ be a realization of $e$ with vertex set $[n]$. Since $e$ is split, by Theorem~\ref{thm: split seqs} we know that $\Delta_{m(e)} = 0$. By Theorem~\ref{thm: forced via EG diff} every pair of vertices from $\{1,\dots,m(e)\}$ forms a forced edge. Likewise, any pair of vertices from $\{m(e)+1,\dots,n\}$ forms a forced non-edge. By Theorem~\ref{thm: majorization preserves forced}, these forcible adjacency relationships exist also for $d$, so $\{1,\dots,m(e)\}, \{m(e)+1,\dots,n\}$ is a partition of the vertex set of any realization of $d$ into a clique and an independent set; hence $d$ is also split.
\end{proof}

Note that by Theorems~\ref{thm: forced via EG diff} and~\ref{thm: EG and canon decomp}, adjacencies and non-adjacencies between vertices in distinct canonical components, as well as adjacencies between two clique vertices and non-adjacencies between two independent-set vertices in split canonical components, are all forcible adjacency relationships. Thus every realization of the degree sequence of a canonically decomposable graph is canonically decomposable. It is natural to then, as we did for split sequences, refer to a degree sequence itself as \emph{decomposable} if it has a decomposable realization.

The forcible adjacency relationships between canonical components and inside split components further imply, via an argument similar to that of Corollary~\ref{cor: splits upward closed},  that canonically decoposable graphs have the same majorization property that split graphs do.

\begin{cor}
Let  $d$ and $e$ be graphic elements of $\mathcal{P}_{s,n}$. If $d \succeq e$ and $e$ is canonically decomposable, then $d$ is canonically decomposable.
\end{cor}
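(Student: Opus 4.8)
The plan is to mirror the proof of Corollary~\ref{cor: splits upward closed}, replacing the single clique/independent-set cut at $m(e)$ with the cut furnished by the top canonical component. First I would fix a realization $G$ of $e$ with canonical decomposition $(G_k,A_k,B_k)\circ\dots\circ(G_1,A_1,B_1)\circ G_0$; since $e$ is decomposable we may take $k \geq 1$, so that the set $W = V(G)\setminus(A_k\cup B_k)$, namely the vertex set of the lower factor $(G_{k-1},A_{k-1},B_{k-1})\circ\dots\circ G_0$, is nonempty.

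The heart of the argument is that the adjacencies and non-adjacencies witnessing this decomposition are all forcible for $e$. Precisely, the analysis already carried out in the proof of Theorem~\ref{thm: envelope formulas} shows that, for $e$: every pair within $B_k$ is a forced edge (via Theorem~\ref{thm: forced via EG diff}(1)); every pair within $A_k$ is a forced non-edge (via Theorem~\ref{thm: forced via EG diff}(3)); every pair with one vertex in $B_k$ and one in $W$ is a forced edge (via Theorem~\ref{thm: forced via EG diff}(2)); and every pair with one vertex in $A_k$ and one in $W$ is a forced non-edge (via Theorem~\ref{thm: forced via EG diff}(4)). I would simply invoke these facts rather than rederive them. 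Applying Theorem~\ref{thm: majorization preserves forced} to each such pair, all of these relationships remain forced for $d$ because $d \succeq e$.

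It then remains to read off the decomposition of $d$. Let $G'$ be any realization of $d$. The forced relationships above guarantee that in $G'$ the set $B_k$ is a clique, $A_k$ is independent, every $B_k$--$W$ pair is an edge, and every $A_k$--$W$ pair is a non-edge. Consequently $G'[A_k\cup B_k]$ is a split graph with the partition $(A_k,B_k)$, and $G' = (G'[A_k\cup B_k],A_k,B_k)\circ G'[W]$, since the disjoint union of the two induced subgraphs together with exactly the $B_k$--$W$ join reproduces $G'$. As both factors are nonempty (the first contains the nonempty set $A_k\cup B_k$, the second contains the nonempty set $W$), the realization $G'$ is decomposable; hence $d$ is decomposable.

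The main obstacle I anticipate is not any hard estimate but the bookkeeping needed to be sure the four families of pairs above are genuinely forced for $e$ and that they match the composition operation $\circ$ exactly. In particular one must confirm that the only edges between $A_k\cup B_k$ and $W$ in $G'$ are precisely the complete $B_k$--$W$ join and that the split partition of $G'[A_k\cup B_k]$ is valid, and that the nonemptiness of $W$ (equivalently $k\geq 1$) is exactly what decomposability of $e$ provides. Observation~\ref{obs: two vtcs} and Theorem~\ref{thm: EG and canon decomp} dispose of the remaining degenerate cases, so that once the forced pairs are in hand the conclusion follows as directly as in Corollary~\ref{cor: splits upward closed}.
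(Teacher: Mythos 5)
Your proposal is correct and follows essentially the same route the paper indicates: it invokes the forced pairs within and between canonical components (already established in the proof of Theorem~\ref{thm: envelope formulas} via Theorems~\ref{thm: forced via EG diff} and~\ref{thm: EG and canon decomp}), transfers them to $d$ by Theorem~\ref{thm: majorization preserves forced}, and reads off the decomposition of any realization of $d$, exactly mirroring Corollary~\ref{cor: splits upward closed}. The only addition you make is to spell out the bookkeeping (nonemptiness of $W$ and of $V(G_k)$, and the degenerate single-vertex component case), which the paper leaves implicit; this is handled correctly.
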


More generally, all sequences with at least one forced pair form an upward-closed set in $\mathcal{P}_{s,n}$. We now show, in fact that these sequences lie close in $\mathcal{P}_{s,n}$ to split or decomposable sequences. The key will be the observation that according to Lemma~\ref{lem: EG diff counts this}, having a small Erd\H{o}s--Gallai difference requires a graph to have a vertex partition that closely resembles that of a split or decomposable graph.

Our measurement of ``closeness'' in $\mathcal{P}_{s,n}$ will involve counting covering relationships. A unit transformation on a nonincreasing integer sequence is said to be an \emph{elementary transformation} if there is no longer sequence of unit transformations that produces the same result; in other words, an elementary transformation changes an integer sequence into one that immediately covers it in $\mathcal{P}_{s,n}$. As shown by Brylawski~\cite{Brylawski73}, a unit transformation on a sequence $b=(b_1,\dots,b_\ell)$ is an elementary transformation if and only if, supposing that the $p$th term of $b$ is increased and the $q$th term is decreased, we have either $q=p+1$ or $b_p=b_q$. The rightmost sequence in Figure~\ref{fig: elem trans} shows the result of an elementary transformation on the original sequence, while the middle sequence shows a non-elementary unit transformation.

\begin{thm}\label{thm: three steps}
If $e$ is a graphic sequence in $\mathcal{P}_{s,n}$ that induces any forcible adjacency relationships among the vertices of its realizations, then some sequence $d$ that is split or canonically decomposable is located at most three elementary transformations above $e$ in $\mathcal{P}_{s,n}$.
\end{thm}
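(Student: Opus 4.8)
The plan is to split on whether any Erd\H{o}s--Gallai difference of $e$ vanishes. First suppose $\Delta_k(e)=0$ for some $k$ with $1\le k\le n$. Fixing a realization $G$ and applying Lemma~\ref{lem: EG diff counts this} with $B=\{1,\dots,k\}$ and with $A,C$ the vertices outside $B$ of degree less than $k$ and at least $k$ respectively, the four nonnegative quantities $e(A)$, $\overline{e}(B)$, $e(A,C)$, $\overline{e}(B,C)$ sum to $0$ and so all vanish: $B$ is a clique, $A$ is independent, there are no $A$--$C$ edges, and $B$ is completely joined to $C$. Setting $F=G[A\cup B]$ and $H=G[C]$ yields $G=(F,A,B)\circ H$, so $e$ is canonically decomposable if $C\ne\emptyset$ and split if $C=\emptyset$; here no transformation is needed. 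I would therefore henceforth assume $\Delta_k(e)\ge 1$ for every $k\ge 1$. Then conditions (2) and (4) of Theorem~\ref{thm: forced via EG diff} are unavailable, so the assumed forcible relationship comes from condition (1) or (3), and in either case there is a critical index $k$ with $\Delta_k(e)=1$.

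The goal then is to produce a \emph{graphic} list $d$ with $\Delta_k(d)=0$ at most three elementary transformations above $e$; the paragraph above, applied to $d$, certifies $d$ split or decomposable. I would first record via Lemma~\ref{lem: EG diff counts this} that $\Delta_k(e)=1$ forces $B=\{1,\dots,k\}$ to be a clique and $A$ independent in every realization, with exactly one defect---either one edge joining $A$ to $C$ or one non-edge joining $B$ to $C$---so that $B$ and $C$ are both nonempty. A key simplification is that $\mathcal{P}_{s,n}$ consists of \emph{all} integer lists of sum $s$, not just graphic ones, so only the endpoint $d$ must be graphic; the intermediate lists along a covering chain may be arbitrary.

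The construction is a single unit transformation that raises a degree in $B$ and lowers a degree in $C$: increase $e_p$ for some $p\le k$ and decrease $e_q$ for some $q>k$ with $e_q\ge q$. Such a move leaves $\RHS_k$ unchanged (the lowered term stays at least $k$) while raising $\LHS_k$ by $1$, so $\Delta_k$ drops from $1$ to $0$. The choice $e_q\ge q$ is what keeps $d$ graphic: the only way a single move can cost some $\Delta_{k'}$ two units is to have $e_q\le k'<q$, which is impossible once $e_q\ge q$, so for every $k'$ one gets $\Delta_{k'}(d)\ge \Delta_{k'}(e)-1\ge 0$. Finally I would realize this unit transformation as a short chain of elementary transformations using the criterion of Brylawski recalled above: slide the lowered dot to the bottom corner of its equal-value block, carry it across to the block being raised (an adjacent-row or equal-value step), and seat it at the top corner of that block. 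Each stage is a single elementary transformation, so $d$ sits at most three covers above $e$, and in the generic case a single cover suffices.

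The hard part will be the boundary bookkeeping. The clean argument presumes a suitable $q>k$ with $e_q\ge q$ exists and that the raise/lower positions already sit at block corners; neither is guaranteed when $k$, $m(e)$, and the relevant value blocks abut one another. Producing $q$ when $e_{k+1}$ equals $k$ exactly, and absorbing any extra equal-value shuffles needed to reach the corners, are precisely the contingencies that must be bounded by three elementary steps, with the endpoint's graphicality reverified in each (equivalently, that no $\Delta_{k'}$ was driven below $0$). The two defect types are treated in parallel, the single-edge case worked from the independent side; and Corollary~\ref{cor: EG diff at least 2}, giving $\Delta_{k'}(e)\ge 2$ for all $k'>m(e)$, supplies slack that keeps the high-index differences safely positive and confines every delicate adjustment to indices $k'\le m(e)$. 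Carrying out this finite case analysis and confirming the uniform bound of three is the main obstacle.
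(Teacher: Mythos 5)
Your overall strategy tracks the paper's quite closely: dispose of the case $\Delta_k(e)=0$ immediately, reduce to $\Delta_k(e)=1$ for some $k$, perform one unit transformation that moves a unit of degree from the top of $C$ down to the bottom of $B$ so that $\Delta_k$ drops to $0$, and then realize that unit transformation by at most three elementary transformations via Brylawski's criterion. Where you genuinely differ is in how $d$ is certified graphic and decomposable: the paper never checks the Erd\H{o}s--Gallai inequalities for $d$ at all --- it normalizes neighborhoods with an edge-switching fact, deletes one edge and adds another, and observes that the resulting \emph{graph} is canonically decomposable with partition $A,B,C$, so graphicality comes for free. Your purely numerical route is legitimate and in some ways cleaner (no auxiliary switching lemma), but it places all the weight on the existence of an index $q>k$ with $e_q\ge q$, and that is precisely the step you leave open. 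As written, the proposal asserts rather than proves the claim on which everything depends, so there is a genuine gap.

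The gap is closable, and you already hold the tool: the defect analysis from Lemma~\ref{lem: EG diff counts this}. When $\Delta_k(e)=1$ the single defect is either (i) one $A$--$C$ edge with $B$ completely joined to $C$, or (ii) one $B$--$C$ non-edge with no $A$--$C$ edges. In case (i) the $C$-vertex with the $A$-neighbor is adjacent to all $k$ vertices of $B$ and to that $A$-vertex, so its degree is at least $k+1$. In case (ii) the deficient $C$-vertex has degree at least $k$, at most $k-1$ neighbors in $B$, and none in $A$, so it has a neighbor inside $C$; that neighbor is complete to $B$ and adjacent to the deficient vertex, so again some $C$-vertex has degree at least $k+1$. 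Hence $e_{k+1}\ge k+1$ and you may take $q=k+1$; the contingency ``$e_{k+1}$ equals $k$ exactly'' that you flag simply cannot occur when $\Delta_k(e)=1$. With that in hand your inequality $\Delta_{k'}(d)\ge\Delta_{k'}(e)-1\ge 0$ goes through for every $k'\ge 1$, and Corollary~\ref{cor: EG diff at least 2} covers indices beyond $m(e)$ exactly as you say. What remains is the multiplicity bookkeeping for the bound of three, which you sketch but do not execute; the paper needs three explicit subcases depending on whether the values $\deg(b)$ and $\deg(c)$ are repeated and whether they differ by one, and your ``slide, carry, seat'' outline is the same idea but must be checked against Brylawski's criterion in each subcase to confirm that three covers always suffice.
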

\begin{proof}
By Theorem~\ref{thm: forced via EG diff}, $e$ can only force vertices to be adjacent or nonadjacent if $\Delta_k(e) \leq 1$ for some positive $k$. If for such a $k$ we have $\Delta_k(e)=0$, then by Theorems~\ref{thm: split seqs} and~\ref{thm: EG and canon decomp} we may let $d=e$.

Suppose instead that $\Delta_k(e)=1$ for some $k$, and let $G$ be a realization of $e$ on vertex set $[n]$. Partition $V(G)$ into sets $A$, $B$, and $C$ as in the statement of Lemma~\ref{lem: EG diff counts this}, with $B=\{i: 1 \leq i \leq k\}$. Since $\Delta_k(e)=1$, this lemma implies that $A$ is an independent set, $B$ is a clique, and exactly one of the following cases holds:
\begin{enumerate}
\item[(1)] there is a single edge joining a vertex in $A$ with a vertex in $C$, and all edges possible exist joining vertices in $B$ with vertices in $C$;
\item[(2)] there is a single non-edge between a vertex in $B$ and a vertex in $C$, and there are no edges joining vertices in $A$ with vertices in $C$.
\end{enumerate}
We consider each of these cases in turn. We will use the following statement, which is  proved using elementary edge-switching arguments:

\medskip
\noindent \textsc{Fact}~\cite[Lemma~3.2]{BarrusDonovan16}: \emph{Given a vertex $v$ of a graph $G$ and a set $T \subseteq V(G)-\{v\}$, suppose that $v$ has $p$ neighbors in $T$. For any set $S$ of $p$ vertices of $T$ having the highest degrees in $G$, there exists a graph $G'$ with the same vertex set as $G$ in which the neighborhood of $v$, restricted to $T$, is $S$, all neighbors of $v$ outside of $T$ are the same as they are in $G$, and every vertex has the same degree in $G'$ as in $G$.}

\medskip
In the first case, let $a$ be the vertex of $A$ having a neighbor in $C$. By the fact above we may assume that the neighbor of $a$ in $C$ is a vertex having the highest degree in $G$ among vertices of $C$; call this neighbor $c$. Since the degree of $a$ is at most $k$, there must be some vertex in $B$ to which $a$ is not adjacent; using the fact again, we may assume that this non-neighbor (call it $b$) has the smallest degree in $G$ among vertices of $B$. Now deleting edge $ac$ and adding edge $ab$ produces a graph having a degree sequence $d$ which, using partition $A,B,C$, we see is canonically decomposable.

In the second case, some vertex of $C$ has a non-neighbor in $B$. By the fact above, we may assume that the non-neighbor in $B$ is a vertex $b$ having the lowest degree in $G$ among vertices of $B$. Now since every vertex of $C$ has degree at least $k$ and no neighbors in $A$, but some vertex in $C$ has a non-neighbor in $B$, this vertex in $C$ must have a neighbor in $C$. Using the fact again, we may then assume that the non-neighbor of $b$ in $C$ is a vertex $u$ having smallest degree among the vertices in $C$. Using the fact once again, we may assume that the neighbors of $u$ in $C-\{u\}$ have as high of degree as possible. Now let $c$ be a neighbor of $u$ that has maximum degree among the vertices of $C$. Deleting the edge $uc$ and adding the edge $ub$ produces a graph a graph having degree sequence $d$ for which, using partition $A,B,C$, we see is canonically decomposable.

In both cases, the effect of creating degree sequence $d$ from $e$ was to perform a unit transformation which reduced the largest degree of a vertex in $C$ and increased the smallest degree of a vertex in $B$. Since by assumption the degrees of vertices in $B$ are the highest in the graph, and the degrees of vertices in $C$ may be assumed to precede those of vertices in $A$ in the degree sequence, the creation of $d$ from $e$ is equivalent to a unit transformation on $e$. In fact, it is equivalent to an elementary transformation if $b$ and $c$ are the unique vertices with their respective degrees or if $b$ has the same degree as $c$. Otherwise, we may accomplish this unit transformation using two or three elementary transformations, as follows (we use $\deg(v)$ to denote the degree of a vertex $v$):

If both $\deg(b)$ and $\deg(c)$ appear multiple times in $d$ and $\deg(b) > \deg(c)+1$, then decrease the last term equal to $\deg(b)$ while increasing the first term equal to $\deg(b)$; decrease the last term equal to $\deg(c)$ while increasing the first term equal to $\deg(c)$; and decrease the term currently equal to $\deg(c)+1$ while increasing the term currently equal to $\deg(b)-1$.

If $\deg(b)$ appears multiple times in $d$ and either $\deg(c)$ appears only once or $\deg(b)=\deg(c)+1$, then decrease the last term equal to $\deg(b)$ while increasing the first term equal to $\deg(b)$; then decrease the last term equal to $\deg(c)$ while increasing the first term currently equal to $\deg(b)-1$.

If $\deg(c)$ appears multiple times in $d$ and $\deg(b)$ appears only once, then decrease the last term equal to $\deg(c)$ while increasing the first term equal to $\deg(c)$; then decrease the last term currently equal to $\deg(c)+1$ while increasing the first term currently equal to $\deg(b)$.
\end{proof}

We conclude by showing that the bound in Theorem~\ref{thm: three steps} is sharp for infinitely many degree sequences.

\begin{exa} Consider the sequence $s=\left((15+2j)^{(5)},  6^{(7+2j)}, 3^{(7)}\right)$, where $j$ is any nonnegative integer; note that $\Delta_5(s) = 1$ and that $\Delta_k(s) \neq 0$ for all positive $k$. Let $s'$ and $s''$ denote sequences obtained by performing respectively one and two elementary transformations on $s$.

Observe by inspection that $s''_7 \geq s'_7 \geq s_7 = 6$ and $s''_8 = s'_8 = s_8 < 7$; thus $m(s)=m(s')=m(s'')=7$. To test whether any of $s,s',s''$ has a realization that is decomposable, by Theorem~\ref{thm: EG and canon decomp} and Corollary~\ref{cor: EG diff at least 2}, it suffices to test whether equality holds in any of the first seven Erd\H{o}s--Gallai inequalities for the corresponding sequence.

Recalling our notation from before, we see that since $\LHS_k(s) \leq \LHS_k(s') \leq \LHS_k(s'')$ and $\RHS_k(s) \leq \RHS_k(s') \leq \RHS_k(s'')$, if any of $s,s',s''$ satisfied the $k$th Erd\H{o}s--Gallai inequality with equality, it would follow that $\RHS_k(s) \leq LHS_k(s'')$. Now consider the table below, which shows the maximum possible value for $\LHS_k(s'')$ and the value of $\RHS_k(s)$ for each $k \in \{1,\dots,7\}$.

\begin{center}
\begin{tabular}{ccc}
\hline
$k$ & max $\LHS_k(s'')$ & $\RHS_k(s)$\\ \hline
$1$ & $16+2j$  & $18+2j$  \\ \hline
$2$ & $32+4j$  & $36+4j$  \\ \hline
$3$ & $47+6j$  & $54+6j$  \\ \hline
$4$ & $61+8j$  & $65+8j$  \\ \hline
$5$ & $75+10j$ & $76+10j$ \\ \hline
$6$ & $82+10j$ & $87+12j$ \\ \hline
$7$ & $89+10j$ & $93+12j$ \\ \hline
\end{tabular}
\end{center}

We see that each of $s,s',s''$ is graphic. Furthermore, since in no case does $\RHS_k(s) \leq LHS_k(s'')$, we conclude that any canonically decomposable degree sequence that majorizes $s$ must be separated from $s$ by at least three elementary transformations. (As guaranteed above, the sequence $\left(16+2j, (15+2j)^{(4)}, 6^{(6+2j)},5,3^{(7)}\right)$ is located three elementary transformations above $s$ and is the degree sequence of a canonically decomposable graph.) 
\end{exa}

\end{document}